\newcommand{\1}{{\rm 1\hspace*{-0.4ex}%
\rule{0.1ex}{1.52ex}\hspace*{0.2ex}}}
\theoremstyle{plain}
\newtheorem{theorem}{Theorem}
\newtheorem{corollary}{Corollary}
\newtheorem{proposition}{Proposition}
\theoremstyle{definition}
\newtheorem{optimization}{Optimization}
\theoremstyle{remark}
\newtheorem{remark}{Remark}
\title{\LARGE \bf
Incentive Design for Efficient Building Quality of Service
}
\author{Anil Aswani$^{1}$ and Claire Tomlin$^{1}$
\thanks{*This material is based upon work supported by the National Science Foundation under Grant CNS-0931843 (CPS-ActionWebs). The views and conclusions contained in this document are those of the authors and should not be interpreted as representing the official policies, either expressed or implied, of the National Science Foundation.}
\thanks{$^{1}$A. Aswani and C. Tomlin are with the Department of Electrical Engineering and Computer Sciences, University of California, Berkeley CA 94720, USA {\tt\small \{aaswani,tomlin\} at eecs.berkeley.edu}}%
}
\begin{document}

\maketitle
\thispagestyle{empty}
\pagestyle{empty}

\begin{abstract}
Buildings are a large consumer of energy, and reducing their energy usage may provide financial and societal benefits.  One challenge in achieving efficient building operation is the fact that few financial motivations exist for encouraging low energy configuration and operation of buildings.  As a result, incentive schemes for managers of large buildings are being proposed for the purpose of saving energy.  This paper focuses on incentive design for the configuration and operation of building-wide heating, ventilation, and air-conditioning  (HVAC) systems, because these systems constitute the largest portion of energy usage in most buildings.  We begin with an empirical model of a building-wide HVAC system, which describes the tradeoffs between energy consumption, quality of service (as defined by occupant satisfaction), and the amount of work required for maintenance and configuration.  The model has significant non-convexities, and so we derive some results regarding qualitative properties of non-convex optimization problems with certain partial-ordering features.  These results are used to show that ``baselining'' incentive schemes suffer from moral hazard problems, and they also encourage energy reductions at the expense of also decreasing occupant satisfaction.  We propose an alternative incentive scheme that has the interpretation of a performance-based bonus.  A theoretical analysis shows that this encourages energy and monetary savings and modest gains in occupant satisfaction and quality of service, which is confirmed by our numerical simulations.
\end{abstract}

\section{Introduction}

Some large buildings have dedicated, full-time staff (called building managers) who are tasked with configuration and maintenance of the building.  These building managers are evaluated on their ability to properly regulate security, safety, and occupant satisfaction of the building.  On the other hand, their roles have traditionally not included achieving sustainable building operation, especially since this objective can be counter to occupant satisfaction (though safety and security do not substantially affect building energy consumption).  

Though building electricity use currently constitutes a small percentage of total operating expenses of corporations, achieving energy savings can bestow significant financial and societal benefits.  For instance, building electricity usage constitutes seventy percent of total usage in the United States \cite{buildings2009}.  As a result, monetary incentive schemes for improving the sustainability of organizations utilizing large buildings are slowly being introduced in some domains.  

This paper analyzes one proposed incentive scheme for encouraging energy-efficient configuration by building managers and then uses the insights gained to design another incentive scheme with better properties.  We focus on incentives for building managers specifically applied to the energy consumption of heating, ventilation, and air-conditioning (HVAC) systems, and we have two reasons for this.  First, HVAC constitutes forty percent of total energy used in a building \cite{buildings2009}.  Second, HVAC explicitly admits a quality of service signal, in addition to energy usage, that can be measured and considered in the incentive design \cite{callaway2011}.


Incentives for the electricity grid are similar to the single building scenario.  Such approaches can be classified \cite{bushnell2009} into either dynamic pricing or demand response approaches.  The distinction is that demand response approaches establish a baseline of electricity usage and then provide incentives relative to this baseline (e.g., \cite{lawrence2002,ontario2007,lbnl2011}).  In contrast, dynamic pricing involves treating consumers and producers in a symmetric manner by pricing electricity to accurately reflect generation costs and uncertainty along with demand elasticity (e.g., \cite{schweppe1980,kirschen2003,rooz2010,mathieu2011,libin2011,huang2011,saad2012,lavaei2012}).

\subsection{Demand Response of Electricity}

In this approach, a baseline of electricity consumption for a consumer is established; next, consumers are rewarded (or penalized) for reducing (increasing) their energy usage from the baseline amount.  It is well known \cite{bushnell2009} that this approach suffers two major problems.  \textit{Adverse selection} occurs whenever a consumer is given a reward for a reduction that they would have made without the incentives.  This can occur, for instance, when a child leaves a home to go to college.  \textit{Moral hazard} situations arise because consumers are encouraged by the incentive (whenever they know about the policy in advance) to artificially inflate electricity usage during the period in which the baseline amount is established; the reason is that the rewards achieved are relative to the baseline period, and so a higher baseline will mean a higher reward for a constant level of post-baseline energy savings. 

\subsection{Comparison of Static and Dynamic Models}

This paper begins by performing a Monte Carlo analysis in order to convert a hybrid system dynamical model of a building-wide HVAC system and its energy characteristics into a static model that describes the operational characteristics of the HVAC system. Our static model has only two states, which is fewer than the dynamical model; as a result, it provides greater intuition about the problem.  The static model is easier to analyze with game-theory because temporal effects typically complicate analysis.  Moreover, the static model more closely reflects the fact that the agents utilize fixed control strategies (i.e., the configuration is kept constant) when commissioning a building.   

This is not to say that the dynamic model is inferior.  Such models have been used to experimentally implement energy-efficient HVAC controllers \cite{aswani2012_brites}.  Moreover, incentive design to encourage dynamic building configuration strategies will lead to additional energy savings not possible with the techniques discussed here.  This is an open area.


\subsection{Overview}
The static model has significant non-convexities, and so we derive some results on qualitative properties of non-convex optimization problems with certain partial-ordering properties.  These results are used to analyze models of the utilities of building managers and building owners.  Next, we analyze one scheme that has been proposed for incentivizing building managers to seek reductions in building energy usage; this incentive is shown to suffer from moral hazard problems.  Furthermore, this scheme provides reductions in energy consumption at the expense of occupant satisfaction.  In response, we propose an alternative incentive that does not suffer from moral hazard, and this scheme encourages energy and monetary savings along with improvements to  occupant satisfaction; this is verified with numerical simulations.

\subsection{Notation}

A subscript on a variable denotes a different time step.  For instance, let $E$ be the energy usage of a system.  Then, $E_1$ and $E_2$ refer to energy usage in time periods one and two.  On the other hand, superscript denotes different values of variables.  For instance, a superscript star $^*$ denotes the maximizer of specified optimization problems.  As another example, superscript $\text{min},\text{max}$ denote the extent of variables.  Also, we use the convention that the property of hemicontinuity also includes compactness.

\section{Modeling Building HVAC Operation}

\begin{figure*}
\begin{subfigure}[b]{0.33\textwidth}
\centering
\includegraphics{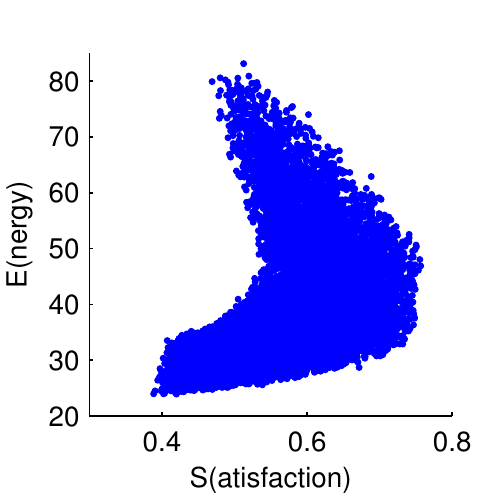}
\caption{Scatter Plot}
\label{fig:scatter}
 \end{subfigure}
\begin{subfigure}[b]{0.33\textwidth}
\centering
\includegraphics{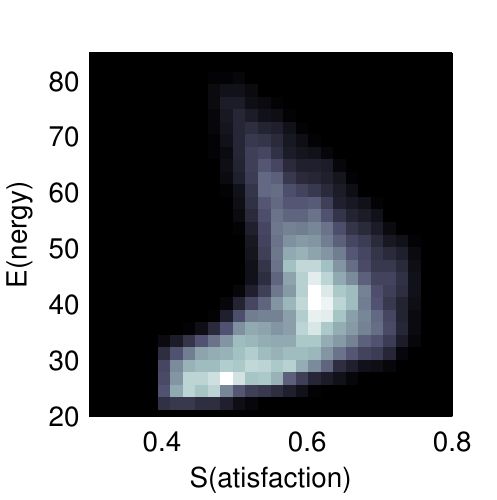}
\caption{Density Plot}
\label{fig:density}
 \end{subfigure}
\begin{subfigure}[b]{0.33\textwidth}
\centering
\includegraphics{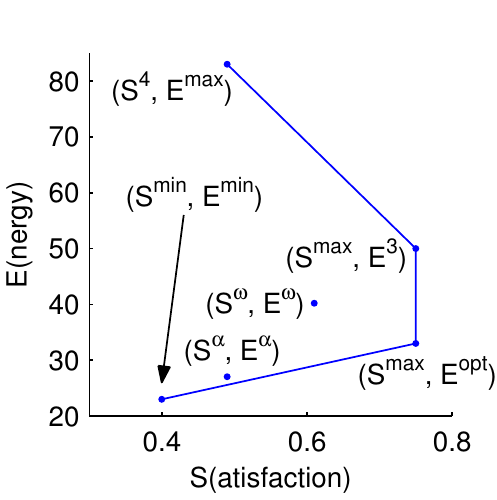}
\caption{Key Points}
\label{fig:main_points}
 \end{subfigure}

\caption{\label{fig:monte_model}A Monte Carlo analysis of thermal and energy models for BRITE-S was used to generate a scatter plot of different operating points of the BRITE-S system.  Its corresponding density plot is shown, and more white values indicate greater densities.  The key points of the static, operational model as determined by the Monte Carlo analysis are also shown.}
\end{figure*}

We begin this section by summarizing a model of thermal dynamics and energy consumption of a building-wide HVAC testbed named BRITE-S \cite{aswani2012_brites} with variable air volume (VAV) equipment, which was used to experimentally implement an energy saving controller.  Next, we conduct a Monte Carlo analysis using this model to construct a static model that is suitable for a game-theoretic analysis.

\subsection{Thermal Dynamics of BRITE-S}

Let $T,F,R,Q \in \mathbb{R}^{n}$ be vectors of temperatures, air flow rates, air reheat amount, and amount of heating load at $n$ different locations in the building.  A hybrid system model is 
\begin{equation}
\label{eqn:dynamics}
T_{k+1} = A\langle m_k \rangle \cdot T_k + B\langle m_k \rangle \cdot F_k+ C\langle m_k \rangle \cdot R_k + Q_k,
\end{equation}
where $A\langle m \rangle \in \mathbb{R}^{n \times n}, B\langle m \rangle \in \mathbb{R}^{n \times n}, C\langle m \rangle \in \mathbb{R}^{n \times n}$ are sets of matrices indexed by $m \in \{1,2,3\}$; this denotes that the hybrid system model has three discrete modes.  

Furthermore, the air flow rates $F$ and reheat amount $R$ are functions of the current temperature $T$ and the desired temperature $T_d \in \mathbb{R}^n$.  This is approximately modeled as a linear function of $T - T_d$ with saturation: More details can be found in \cite{aswani2012_brites}.  Furthermore, each point in the building has constraints on the minimum (maximum) amount of airflow $F \geq F^{\text{min}} \in \mathbb{R}^n$ ($F \leq F^{\text{max}} \in \mathbb{R}^n$).

\subsection{Energy Model of BRITE-S}

Building HVAC systems have several pieces of equipment that contribute to the overall energy usage.  Within BRITE-S, most energy consumption is due to fans that distribute air, chillers that cool the air, and the reheating of air at different points in the building.  Their respective sum is
\begin{equation}
E_k = a(\1^T F_k)^3 + b(t_s\langle m\rangle - o_k)(\1^T F_k) + c(\1^T R_k),
\end{equation}
where $\1$ is a vector of all ones, the superscript $T$ denotes a matrix transpose, $a,b,c$ are positive constants, $t_s\langle m \rangle$ is the temperature to which the air is centrally cooled in the building ($m \in \{1,2,3\}$ is the discrete mode of the thermal dynamics), and $o_k$ is the outside air temperature.

\subsection{Satisfaction Model of BRITE-S}

Let $(x)_+$ be the thresholding function, which is defined so that $(x)_+ = 0$ if $x < 0$ and $(x)_+ = x$ otherwise; we used the following quantification
\begin{equation}
S = 1 - \frac{1}{nk}\textstyle \sum_k \1^T (|T_k - T_d| - B)_+,
\end{equation}
where $B \in \mathbb{R}^n$ is an amount of temperature deviation that does not cause discomfort.  The intuition of this quantity is that satisfaction decreases whenever the temperature deviation exceeds the desired temperatures $T_d$ by more than $B$, and the amount of decrease in satisfaction is proportional to the magnitude and duration of this deviation.

\subsection{Monte Carlo Analysis of BRITE-S}

The variables $F^{\text{min}},F^{\text{max}},m$ correspond to different configurations of the HVAC equipment, and the variables $o_k,Q_k$ relate to variations in weather and occupancy conditions.  We conducted a Monte Carlo analysis in which these variables were randomly varied according to a uniform distribution whose extents are physically realistic values.   For different instantiations of these variables, the energy consumption $E = \sum_k E_k$ and occupant satisfaction $S$ over one day were computed.  Varying the distribution used in this analysis did not significantly change the qualitative features of the results; this is significant, because our theoretical results and analysis depend only upon these qualitative features.

\subsection{Static, Operational Model of BRITE-S}

The results of the Monte Carlo analysis are summarized in Fig. \ref{fig:monte_model}.  A scatter plot (i.e., Fig. \ref{fig:scatter}) indicates the possible range of the HVAC equipment operation in the space of energy usage $E$ versus satisfaction $S$.  We denote this region of feasible operating points, which can be bounded by a nonconvex polygon $\mathcal{O}$.  The density of the points in the scatter plot is shown in Fig. \ref{fig:density}.  

We model the amount of work $W(S,E)$ required by the building manager to configure the building to operate at the point $(S,E)$ as being inversely proportional to the density of the scatter plot.  This is an approximate model that we pick because it (i) describes the intuition that achieving $(S,E)$ is more difficult if fewer configurations keep the building operating at that point, and (ii) admits a tractable analysis.  For simplicity, we normalize this function so that $W(S,E) \in [0,1]$ and $W(S,E) \geq 0$ for all $(S,E) \in \mathcal{O}$.  Also, we assume that $W(S,E)$ is continuous.

The key points of this model (besides the set of feasible operating points $\mathcal{O}$) are shown in Fig. \ref{fig:main_points}.  Our model features
\begin{enumerate}

\item Two Isolated Minima: $(S^\alpha,E^\alpha),(S^\omega,E^\omega) \in \text{int}(\mathcal{O})$, which obey the relationship that $W(S^\alpha,E^\alpha) = W(S^\omega,E^\omega) < W(S,E)$ for all $(S,E) \in \mathcal{O} \setminus \{(S^\alpha,E^\alpha),(S^\omega,E^\omega)\}$;

\item Minima Ordering: $S^\alpha < S^\omega$ and $E^\alpha < E^\omega$;

\item Linear Bounds: $(S^{\text{min}},E^{\text{min}}),(S^{\text{max}},E^{\text{opt}}),(S^{\text{max}},E^3)$, and $(S^4,E^\text{max})$ form the right boundary of $\mathcal{O}$.

\end{enumerate}

This model has some associated intuition.  The smaller minimum $(S^\alpha,E^\alpha)$ corresponds to configurations in which the HVAC system is turned off --- though energy is easily saved, this comes at the cost of reduced occupant satisfaction.  The larger minimum $(S^\omega,E^\omega)$ corresponds to configurations in which the HVAC system is minimally configured; the satisfaction and energy usage are of moderate levels.  

The upper portion of $\mathcal{O}$ is related to the fact that the HVAC system can be configured to purposely waste energy, but this does cause comfort to be reduced; an example of this is increasing the amount of cooling in the building to unreasonable levels.  Lastly, there is a maximum amount of satisfaction that can be achieved and a variety of configurations that can achieve this.  This occurs in building-wide HVAC because some configurations simultaneously cool and heat air, which wastes energy without affecting comfort.


\section{Some Generic Non-Convex Optimization}

We provide results about qualitative properties of optimization problems with certain generic properties.  Essentially, if certain terms in the objective have order-preserving properties (e.g., strictly increasing or strictly decreasing functions), then certain components of the minimizing argument also obey order-preserving properties.  For our purposes, we assume that all variables involved in the optimization are scalars.  These results naturally extend to functions of vector-valued functions that are order-preserving under appropriately defined partial orders (e.g., convex cone partial orders).

\begin{optimization}
\label{opt:gen1}
\begin{equation}
\label{eqn:generic}
\max_{x,y}\{f(x) + \lambda g(x,y) : (x,y) \in \mathcal{S}\},
\end{equation}
where $x,y\in\mathbb{R}$, $\mathcal{S}$ is a closed and bounded set, and both $f(x)$ and $g(x,y)$ are continuous functions. 
\end{optimization}

\begin{remark}
\label{remark:one}
Note that these assumptions allow us to apply the Berge maximum theorem \cite{berge1963}, which gives that
\begin{enumerate}
\item the maximum (and minimum) of the objective in (\ref{eqn:generic}) restricted to the set $\mathcal{S}$ is attained;
\item the set of maximizers $\mathcal{M}(\lambda) = \arg\ (\ref{eqn:generic})$ is upper hemicontinuous.
\end{enumerate}
\end{remark}

\begin{remark}
\label{remark:two}
The projection of the set of maximizers onto its $x$-component, denoted $x^*(\lambda) = \text{Proj}_x(\mathcal{M}(\lambda))$, is also upper hemicontinuous because of the closed map lemma.  Applying the Berge maximum theorem again shows that the maximum (and minimum) of the set $x^*(\lambda)$ is attained.  We define
\begin{align}
\overline{x}^*(\lambda) &= \max \{x : x \in x^*(\lambda)\} \\
\underline{x}^*(\lambda) &= \min \{x : x \in x^*(\lambda)\}.
\end{align}
\end{remark}

\begin{theorem}
\label{theorem:gen1}
Consider the generic problem Optimization \ref{opt:gen1}.  If $f(x)$ is strictly increasing in $x$ and $\lambda \geq 0$, then $\underline{x}^*(\lambda^1)  \geq \overline{x}^*(\lambda^2)$ for $0 \leq \lambda^1 < \lambda^2$.
\end{theorem}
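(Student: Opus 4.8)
The plan is to use a revealed-preference (monotone comparative statics) argument built on the pair of optimality inequalities one obtains by comparing a maximizer at $\lambda^1$ with a maximizer at $\lambda^2$. First I would invoke Remark~\ref{remark:one} and Remark~\ref{remark:two}: the Berge maximum theorem guarantees that $\mathcal{M}(\lambda^1)$ and $\mathcal{M}(\lambda^2)$ are nonempty and that the extreme values $\underline{x}^*(\lambda^1)$ and $\overline{x}^*(\lambda^2)$ are in fact attained by actual maximizers. Hence it suffices to show $x_1 \ge x_2$ for \emph{every} pair $(x_1,y_1) \in \mathcal{M}(\lambda^1)$ and $(x_2,y_2) \in \mathcal{M}(\lambda^2)$, since specializing to $x_1 = \underline{x}^*(\lambda^1)$ and $x_2 = \overline{x}^*(\lambda^2)$ then yields the theorem.

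Next I would write down the two optimality inequalities. Because $(x_1,y_1)$ maximizes $f(x) + \lambda^1 g(x,y)$ over $\mathcal{S}$ and $(x_2,y_2) \in \mathcal{S}$, we have $f(x_1) + \lambda^1 g(x_1,y_1) \ge f(x_2) + \lambda^1 g(x_2,y_2)$; symmetrically, optimality at $\lambda^2$ gives $f(x_2) + \lambda^2 g(x_2,y_2) \ge f(x_1) + \lambda^2 g(x_1,y_1)$. Adding the two inequalities cancels the $f$ terms and rearranges to $(\lambda^1 - \lambda^2)\bigl(g(x_1,y_1) - g(x_2,y_2)\bigr) \ge 0$. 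Since $\lambda^1 < \lambda^2$, the factor $\lambda^1 - \lambda^2$ is strictly negative, which forces $g(x_2,y_2) \ge g(x_1,y_1)$.

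Then I would substitute this back into the first optimality inequality to get $f(x_1) - f(x_2) \ge \lambda^1\bigl(g(x_2,y_2) - g(x_1,y_1)\bigr) \ge 0$, where the last inequality uses $\lambda^1 \ge 0$ together with $g(x_2,y_2) \ge g(x_1,y_1)$. Hence $f(x_1) \ge f(x_2)$, and since $f$ is strictly increasing this is equivalent to $x_1 \ge x_2$. Applying this to the extreme selections gives $\underline{x}^*(\lambda^1) \ge \overline{x}^*(\lambda^2)$, as claimed.

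The only real subtlety — and thus the main, though mild, obstacle — is ensuring the comparison is made between arbitrary maximizers at the two parameter values rather than just one convenient pair, so that the conclusion transfers to the extreme selections $\underline{x}^*(\lambda^1)$ and $\overline{x}^*(\lambda^2)$; this is exactly what the argument delivers, since nothing about $(x_1,y_1)$ and $(x_2,y_2)$ was used beyond their optimality. Existence and attainment of these extremes require no extra topological work, as they are already supplied by the Berge-theorem invocations recorded in the two remarks. Note also that the strictness $\lambda^1 < \lambda^2$ is used only to sign the factor $\lambda^1 - \lambda^2$; the monotone conclusion on $x$ would degrade to a non-strict statement if equality were allowed, but that case is excluded by hypothesis.
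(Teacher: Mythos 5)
Your proof is correct, but it follows a genuinely different route from the paper's. The paper argues by contradiction and splits into two cases: for $\lambda^1=0$ it notes the $x$-component of the maximizer is the largest feasible $x$, and for $\lambda^1>0$ it uses the strict monotonicity of $f$ together with the optimality inequality at $\lambda^1$ to show that $g(\underline{x}^*(\lambda^1),\nu^*) > g(x,y)$ for every feasible point with $x>\underline{x}^*(\lambda^1)$ (this step divides by $\lambda^1$, hence the case split), and then adds $(\lambda^2-\lambda^1)g(x,y)$ to conclude that no such point can be optimal at $\lambda^2$. You instead run the standard revealed-preference (Topkis-style) argument: sum the two optimality inequalities for an arbitrary pair of maximizers at $\lambda^1$ and $\lambda^2$, obtain $(\lambda^1-\lambda^2)\bigl(g(x_1,y_1)-g(x_2,y_2)\bigr)\ge 0$, hence $g(x_2,y_2)\ge g(x_1,y_1)$, feed this back into the $\lambda^1$-inequality to get $f(x_1)\ge f(x_2)$, and invoke strict monotonicity of $f$ only at the very end to conclude $x_1\ge x_2$ for \emph{every} pair of maximizers; since Remark \ref{remark:two} guarantees the extreme selections $\underline{x}^*(\lambda^1)$ and $\overline{x}^*(\lambda^2)$ are attained by actual maximizers, the theorem follows. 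Your argument buys a uniform treatment of $\lambda^1=0$ and $\lambda^1>0$, avoids the contradiction framing, and makes transparent that the only role of strict monotonicity is to convert $f(x_1)\ge f(x_2)$ into $x_1\ge x_2$; the paper's argument, while longer, establishes along the way the strict dominance of the objective at $\lambda^2$ over all points with larger $x$-component, which is the template it then reuses (via reparametrization) for Theorem \ref{theorem:gen2}.
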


\begin{proof}
Suppose this theorem were not true.  Then there exists $0\leq \lambda^1 < \lambda^2$ such that $\underline{x}^*(\lambda^1) < \overline{x}^*(\lambda^2)$.  We show that this results in a contradiction.

The first case occurs when $\lambda^1 = 0$.  Since $f(\cdot)$ is strictly increasing, the corresponding maximizer is uniquely given by $x^*(0) = \max \{x : (x,y) \in \mathcal{S}\}$.  As a result, it must be that $\overline{x}^*(\lambda^2) \leq x^*(0)$ for any $\lambda^2 > 0$.  This is a contradiction.

The second case occurs when $\lambda^1 > 0$.  Because there exists some $\nu^*$ such that $(\underline{x}^*(\lambda^1), \nu^*)$ is a maximizer for $\lambda^1$, this means that
\begin{equation}
\label{eqn:m1}
f(\underline{x}^*(\lambda^1)) + \lambda^1 g(\underline{x}^*(\lambda^1),\nu^*) \geq f(x) + \lambda^1 g(x,y),
\end{equation}
for all $(x,y) \in \mathcal{S}$.  For any $(x,y) \in \mathcal{S} : x > \underline{x}^*(\lambda^1)$, the hypothesis on the strictly increasing characteristic of $f(\cdot)$ and the inequality (\ref{eqn:m1}) imply that
\begin{equation}
\label{eqn:m3}
g(\underline{x}^*(\lambda^1),\nu^*) > g(x,y).
\end{equation}

Next, adding $(\lambda^2-\lambda^1)g(x,y)$ to both sides of the inequality (\ref{eqn:m1}) and then simplifying gives
\begin{align}
\label{eqn:m2}
&f(\underline{x}^*(\lambda^1)) + \lambda^1 g(\underline{x}^*(\lambda^1),\nu^*) + (\lambda^2-\lambda^1)g(x,y) \\
& \quad \geq f(x) + \lambda^2 g(x,y).
\end{align}
Because this holds for all $(x,y) \in \mathcal{S}$, it also holds for any $(x,y) \in \mathcal{S} : x > \underline{x}^*(\lambda^1)$.

Now consider maximizing (\ref{eqn:m2})  over $(x,y) \in \mathcal{S} : x > \underline{x}^*(\lambda^1)$.  Because $\lambda^2 > \lambda^1$, this is equivalent to maximizing $g(x,y)$ over $(x,y) \in \mathcal{S} : x > \underline{x}^*(\lambda^1)$.  However, the inequality (\ref{eqn:m3}) implies that the supremum value over this range is $g(\underline{x}^*(\lambda^1),\nu^*)$.  Consequently, $f(\underline{x}^*(\lambda^1)) + \lambda^2 g(\underline{x}^*(\lambda^1),\nu^*) > f(x) + \lambda^2 g(x,y)$ for all $(x,y) \in \mathcal{S} : x > \underline{x}^*(\lambda^1)$.  This is a contradiction when assuming $\underline{x}^*(\lambda^1) < \overline{x}^*(\lambda^2)$ and $\lambda^1 > 0$, because there exists some $\eta^*$ such that $(\overline{x}^*(\lambda^2), \eta^*)$ is a maximizer for $\lambda^2$.
\end{proof}

\begin{corollary}
\label{corollary:gen1}
Consider the generic problem Optimization \ref{opt:gen1}.  If $f(x)$ is strictly decreasing in $x$ and $\lambda \geq 0$, then $\overline{x}^*(\lambda^1) \leq \underline{x}^*(\lambda^2)$ for $0 \leq \lambda^1 < \lambda^2$.
\end{corollary}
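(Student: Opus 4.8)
The plan is to reduce the statement to Theorem~\ref{theorem:gen1} by the change of variables $z = -x$. Concretely, I would set $\tilde f(z) = f(-z)$, $\tilde g(z,y) = g(-z,y)$, and $\tilde{\mathcal S} = \{(z,y) : (-z,y) \in \mathcal S\}$, and consider the problem $\max_{z,y}\{\tilde f(z) + \lambda \tilde g(z,y) : (z,y)\in\tilde{\mathcal S}\}$. First I would check that this is a legitimate instance of Optimization~\ref{opt:gen1}: $\tilde f$ and $\tilde g$ are continuous since $f,g$ are and composition with the continuous map $x\mapsto -x$ preserves continuity; $\tilde{\mathcal S}$ is the image of $\mathcal S$ under $(x,y)\mapsto(-x,y)$, hence closed and bounded; and, crucially, $\tilde f$ is \emph{strictly increasing} in $z$ precisely because $f$ is strictly decreasing in $x$. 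Thus the hypotheses of Theorem~\ref{theorem:gen1} are met for the tilded problem.

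Next I would track how the set of maximizers and its $x$-projection transform under the substitution. Writing $\tilde{\mathcal M}(\lambda)$ for the maximizer set of the tilded problem, the bijection $(x,y)\mapsto(-x,y)$ identifies $\mathcal M(\lambda)$ with $\tilde{\mathcal M}(\lambda)$, so the $z$-projection $\tilde z^*(\lambda) = \mathrm{Proj}_z(\tilde{\mathcal M}(\lambda))$ equals $-x^*(\lambda)$ (the pointwise negation of the set $x^*(\lambda)$). Taking extrema, this gives $\overline{z}^*(\lambda) = -\underline{x}^*(\lambda)$ and $\underline{z}^*(\lambda) = -\overline{x}^*(\lambda)$; the attainment of these extrema is guaranteed by Remarks~\ref{remark:one}--\ref{remark:two} applied to the tilded problem.

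Then I would apply Theorem~\ref{theorem:gen1} to the tilded problem: for $0\leq\lambda^1<\lambda^2$ it yields $\underline{z}^*(\lambda^1)\geq\overline{z}^*(\lambda^2)$. Substituting the identities from the previous paragraph, $-\overline{x}^*(\lambda^1)\geq -\underline{x}^*(\lambda^2)$, which upon multiplying by $-1$ is exactly $\overline{x}^*(\lambda^1)\leq\underline{x}^*(\lambda^2)$, the claim. I do not anticipate a serious obstacle here; the only point requiring care is the bookkeeping of how $\overline{\phantom{x}}$ and $\underline{\phantom{x}}$ swap under negation and the verification that negation does not disturb the closedness/continuity hypotheses — both routine. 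An alternative, if one preferred a self-contained argument, would be to replay the two-case contradiction argument of Theorem~\ref{theorem:gen1} verbatim with all the inequalities on $f$ and on the $x$-coordinates reversed, but the change-of-variables route is cleaner and avoids duplicating the work.
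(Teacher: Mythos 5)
Your proposal is correct and is essentially the paper's own proof: the paper proves the corollary by exactly this change of variables $\tilde{x}=-x$, $\tilde{f}(x)=f(-x)$, $\tilde{g}(x,y)=g(-x,y)$, $\tilde{\mathcal{S}}=\{(-x,y):(x,y)\in\mathcal{S}\}$ and an appeal to Theorem~\ref{theorem:gen1}. You simply spell out the bookkeeping (continuity, compactness, and the swap of $\overline{\phantom{x}}$ and $\underline{\phantom{x}}$ under negation) that the paper leaves implicit.
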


\begin{proof}
If we define $\tilde{x} = -x$, $\tilde{f}(x) = f(-x)$, $\tilde{g}(x,y) = g(-x,y)$, and $\tilde{\mathcal{S}} = \{(-x,y) : (x,y) \in \mathcal{S}\}$; then the result follows by applying Theorem \ref{theorem:gen1} to the optimization 
\begin{equation*}
\max_{\tilde{x},y}\{\tilde{f}(\tilde{x}) + \lambda \tilde{g}(\tilde{x},y) : (\tilde{x},y) \in \tilde{\mathcal{S}}\}. \qedhere
\end{equation*}
\end{proof}

\begin{remark}
Note that the results in Theorem \ref{theorem:gen1} and Corollary \ref{corollary:gen1} are ``tight'' in the sense that the maximizers can be multi-valued for fixed values of $\lambda$, even when the objective is linear.  An example of this is seen in Sect. \ref{section:owner}.
\end{remark}

\begin{optimization}
\label{opt:gen2}
\begin{equation}
\label{eqn:generic2}
\max_{x,y}\{\lambda f(x) + g(x,y) : (x,y) \in \mathcal{S}\},
\end{equation}
where $x,y\in\mathbb{R}$, $\mathcal{S}$ is a closed and bounded set, and both $f(x)$ and $g(x,y)$ are continuous functions. 
\end{optimization}

\begin{remark}
Under these assumptions, the facts in Remarks \ref{remark:one} and \ref{remark:two} also hold for Optimization \ref{opt:gen2}.
\end{remark}

\begin{figure*}
\begin{subfigure}[b]{0.33\textwidth}
\centering
\includegraphics{main_points.pdf}
\caption{Key Points}
\label{fig:main_points2}
 \end{subfigure}
\begin{subfigure}[b]{0.33\textwidth}
\centering
\includegraphics{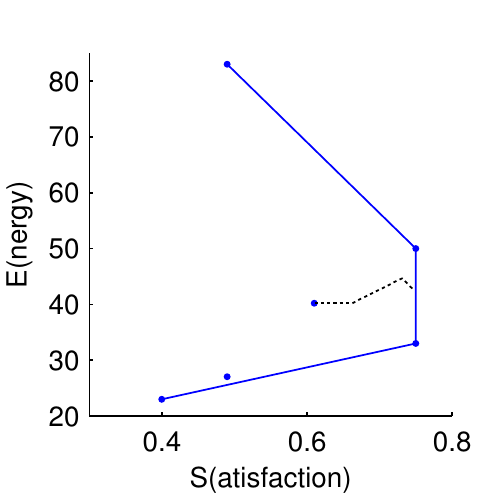}
\caption{Manager Trace}
\label{fig:manager_trace}
 \end{subfigure}
\begin{subfigure}[b]{0.33\textwidth}
\centering
\includegraphics{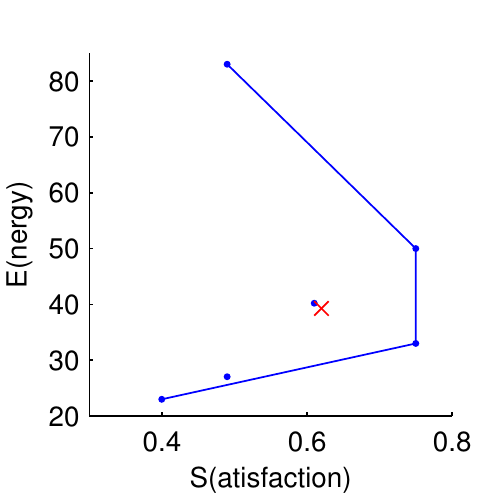}
\caption{Actual Operating Point}
\label{figure:real}
 \end{subfigure}

\caption{\label{fig:monte_model2}The (a) key points of the static, operational model as determined by the Monte Carlo analysis, and (b) a trace (indicted by the dotted line) of operating points for (\ref{eqn:no_incentive}) as $\lambda$ is varied are shown.  Also, the actual operating point of the building as configured in real life is indicated by the cross mark.}
\end{figure*}

\begin{theorem}
\label{theorem:gen2}
Consider the generic problem Optimization \ref{opt:gen2}.  If $f(x)$ is strictly increasing in $x$ and $\lambda \geq 0$, then $\overline{x}^*(\lambda^1) \leq \underline{x}^*(\lambda^2)$ for $0 \leq \lambda^1 < \lambda^2$.
\end{theorem}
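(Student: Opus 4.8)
The plan is to argue by contradiction, in the spirit of the proof of Theorem~\ref{theorem:gen1}, but now exploiting the fact that it is the order-preserving term $f(x)$ that carries the weight $\lambda$. Suppose the claim fails, so that there exist $0 \le \lambda^1 < \lambda^2$ with $\overline{x}^*(\lambda^1) > \underline{x}^*(\lambda^2)$. Using the analogues of Remarks~\ref{remark:one} and \ref{remark:two} for Optimization~\ref{opt:gen2}, I would fix a maximizer $(x_1,\nu^*)$ for $\lambda^1$ with $x_1 = \overline{x}^*(\lambda^1)$ and a maximizer $(x_2,\eta^*)$ for $\lambda^2$ with $x_2 = \underline{x}^*(\lambda^2)$; by the contradiction hypothesis $x_1 > x_2$.

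Next I would write down the two optimality inequalities and cross-test each maximizer against the other point. Optimality of $(x_1,\nu^*)$ for $\lambda^1$, evaluated at $(x_2,\eta^*)$, gives $\lambda^1 f(x_1) + g(x_1,\nu^*) \ge \lambda^1 f(x_2) + g(x_2,\eta^*)$; optimality of $(x_2,\eta^*)$ for $\lambda^2$, evaluated at $(x_1,\nu^*)$, gives $\lambda^2 f(x_2) + g(x_2,\eta^*) \ge \lambda^2 f(x_1) + g(x_1,\nu^*)$. Adding these, the terms $g(x_1,\nu^*)$ and $g(x_2,\eta^*)$ cancel on both sides, leaving $\lambda^1 f(x_1) + \lambda^2 f(x_2) \ge \lambda^1 f(x_2) + \lambda^2 f(x_1)$, i.e. $(\lambda^2-\lambda^1)\bigl(f(x_1)-f(x_2)\bigr) \le 0$. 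To finish, I would invoke the sign hypotheses: $\lambda^2 - \lambda^1 > 0$ because $\lambda^1 < \lambda^2$, and $f(x_1) - f(x_2) > 0$ because $x_1 > x_2$ and $f$ is strictly increasing; their product is strictly positive, which contradicts the displayed inequality. This rules out $\overline{x}^*(\lambda^1) > \underline{x}^*(\lambda^2)$ and proves the theorem. Unlike the proof of Theorem~\ref{theorem:gen1}, no separate treatment of $\lambda^1 = 0$ is needed: if $\lambda^1 = 0$ the first inequality reduces to $g(x_1,\nu^*) \ge g(x_2,\eta^*)$ and the same cancellation yields $\lambda^2\bigl(f(x_1)-f(x_2)\bigr) \le 0$, which is again impossible since $\lambda^2 > 0$.

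I do not anticipate a real obstacle. The only points requiring care are that $\overline{x}^*(\lambda^1)$ and $\underline{x}^*(\lambda^2)$ are genuinely attained by maximizers with those $x$-components — which follows from the Berge maximum theorem together with the closed map lemma, as recorded in the remarks transferred to Optimization~\ref{opt:gen2} — and that the cancellation of the $g$-terms is legitimate, which it is because exactly the two values $g(x_1,\nu^*)$ and $g(x_2,\eta^*)$ occur on both sides of the summed inequality. As an alternative, one could mirror the proof of Theorem~\ref{theorem:gen1} more literally, adding a multiple of $f$ to one optimality inequality and then maximizing over the region $\{x > \underline{x}^*(\cdot)\}$; but the symmetric ``sum of two optimality conditions'' route above is shorter, uses only continuity plus the monotonicity of $f$, and avoids any case analysis.
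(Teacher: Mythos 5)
Your proof is correct, but it takes a genuinely different route from the paper. The paper proves Theorem~\ref{theorem:gen2} in two cases: for $\lambda^1 > 0$ it reparametrizes via $\tilde{\lambda} = 1/\lambda$ (so that maximizing $\lambda f + g$ becomes maximizing $f + \tilde{\lambda} g$ with the order of the parameters reversed) and invokes Theorem~\ref{theorem:gen1}, and for $\lambda^1 = 0$ it runs a separate direct contradiction argument mirroring the structure of Theorem~\ref{theorem:gen1}'s proof (adding $(\lambda^2-\lambda^1)f(x)$ to an optimality inequality and maximizing over $\{x < \overline{x}^*(\lambda^1)\}$). You instead give a single self-contained exchange argument in the style of monotone comparative statics: cross-evaluate the two optimality inequalities at each other's maximizers, add them so the $g$-terms cancel, and read off $(\lambda^2-\lambda^1)\bigl(f(x_1)-f(x_2)\bigr) \le 0$, which contradicts $x_1 > x_2$ and the strict monotonicity of $f$. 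The cancellation is legitimate because both maximizers lie in $\mathcal{S}$, and the only analytic input you need --- attainment of $\overline{x}^*(\lambda^1)$ and $\underline{x}^*(\lambda^2)$ by actual maximizers --- is exactly what the paper's Remarks~\ref{remark:one} and \ref{remark:two} (transferred to Optimization~\ref{opt:gen2}) provide. What your route buys is uniformity and brevity: no case split at $\lambda^1 = 0$, no dependence on Theorem~\ref{theorem:gen1}, and no supremum-over-a-subregion step; what the paper's route buys is reuse of machinery already established, at the cost of the extra boundary case. Both are valid proofs of the stated ordering.
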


\begin{proof}
Suppose $\lambda^1 > 0$.  If we define $\tilde{\lambda}^2 = 1/\lambda^1$ and $\tilde{\lambda}^1 = 1/\lambda^2$; then the result follows by applying Theorem \ref{theorem:gen1} to the optimization 
\begin{equation*}
\max_{x,y}\{f(x) + \tilde{\lambda} g(x,y) : (x,y) \in \mathcal{S}\}.
\end{equation*}

Next suppose that $\lambda^1 = 0$ and this theorem were not true.  Then there exists $0= \lambda^1 < \lambda^2$ such that $\overline{x}^*(\lambda^1) > \underline{x}^*(\lambda^2)$.  We show that this results in a contradiction.

Because there exists some $\nu^*$ such that $(\overline{x}^*(\lambda^1), \nu^*)$ is a maximizer for $\lambda^1$, this means that $g(\overline{x}^*(\lambda^1),\nu^*) \geq g(x,y)$, for all $(x,y) \in \mathcal{S}$.  Thus, it also holds for any $(x,y) \in \mathcal{S} : x < \overline{x}^*(\lambda^1)$ that
\begin{equation}
\label{eqn:m32}
g(\overline{x}^*(\lambda^1),\nu^*) \geq g(x,y).
\end{equation}

Next, adding $(\lambda^2-\lambda^1)f(x)$ to both sides of the inequality (\ref{eqn:m32}) and then simplifying gives
\begin{align}
\label{eqn:m22}
& (\lambda^2-\lambda^1)f(x) + g(\overline{x}^*(\lambda^1),\nu^*) \\
& \quad \geq \lambda^2 f(x) + g(x,y).
\end{align}
Because this holds for all $(x,y) \in \mathcal{S}$, it also holds for any $(x,y) \in \mathcal{S} : x < \overline{x}^*(\lambda^1)$.

Now consider maximizing (\ref{eqn:m22})  over $(x,y) \in \mathcal{S} : x < \overline{x}^*(\lambda^1)$.  Because $\lambda^2 > \lambda^1$, this is equivalent to maximizing $f(x)$ over $(x,y) \in \mathcal{S} : x < \overline{x}^*(\lambda^1)$.  However, the strictly increasing nature of $f(x)$ means that the supremum value over this range is $f(\overline{x}^*(\lambda^1))$.  Consequently, $\lambda^2f(\overline{x}^*(\lambda^1)) + g(\overline{x}^*(\lambda^1),\nu^*) > \lambda^2 f(x) + g(x,y)$ for all $(x,y) \in \mathcal{S} : x < \overline{x}^*(\lambda^1)$.  This is a contradiction when assuming $\overline{x}^*(\lambda^1) > \underline{x}^*(\lambda^2)$ and $\lambda^1 = 0$, because there exists some $\eta^*$ such that $(\underline{x}^*(\lambda^2), \eta^*)$ is a maximizer for $\lambda^2$.
\end{proof}

\begin{corollary}
\label{corollary:gen2}
Consider the generic problem Optimization \ref{opt:gen2}.  If $f(x)$ is strictly decreasing in $x$ and $\lambda \geq 0$, then $\underline{x}^*(\lambda^1) \geq \overline{x}^*(\lambda^2)$ for $0 \leq \lambda^1 < \lambda^2$.
\end{corollary}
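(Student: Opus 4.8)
The plan is to mimic exactly the reduction used in Corollary~\ref{corollary:gen1}, namely a change of variables that turns the strictly decreasing $f$ into a strictly increasing function, and then to invoke Theorem~\ref{theorem:gen2} rather than Theorem~\ref{theorem:gen1}. Concretely, I would set $\tilde{x} = -x$, $\tilde{f}(\tilde{x}) = f(-\tilde{x})$, $\tilde{g}(\tilde{x},y) = g(-\tilde{x},y)$, and $\tilde{\mathcal{S}} = \{(-x,y) : (x,y) \in \mathcal{S}\}$. First I would check that this substitution preserves all the hypotheses of Optimization~\ref{opt:gen2}: $\tilde{\mathcal{S}}$ is closed and bounded because it is the image of the closed bounded set $\mathcal{S}$ under the continuous (indeed linear) map $(x,y)\mapsto(-x,y)$, and $\tilde{f},\tilde{g}$ are continuous as compositions of continuous maps. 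Moreover $\tilde{f}$ is strictly increasing precisely because $f$ is strictly decreasing.

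Next I would observe that the optimization $\max_{\tilde{x},y}\{\lambda \tilde{f}(\tilde{x}) + \tilde{g}(\tilde{x},y) : (\tilde{x},y) \in \tilde{\mathcal{S}}\}$ is literally the same optimization as~(\ref{eqn:generic2}) after the substitution, so its set of maximizers is the image of $\mathcal{M}(\lambda)$ under $(x,y)\mapsto(-x,y)$, and the projection onto the first coordinate satisfies $\tilde{x}^*(\lambda) = -x^*(\lambda)$ as sets. The one point requiring a moment's care — and the only real place an error could creep in — is the order reversal induced by negation: $\overline{\tilde{x}}^*(\lambda) = \max\{-x : x \in x^*(\lambda)\} = -\min\{x : x \in x^*(\lambda)\} = -\underline{x}^*(\lambda)$, and symmetrically $\underline{\tilde{x}}^*(\lambda) = -\overline{x}^*(\lambda)$. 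I would state these two identities explicitly so the translation back is unambiguous.

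With the reduction in place, applying Theorem~\ref{theorem:gen2} to the tilded problem (whose $\tilde{f}$ is strictly increasing and with $\lambda \geq 0$) gives $\overline{\tilde{x}}^*(\lambda^1) \leq \underline{\tilde{x}}^*(\lambda^2)$ for $0 \leq \lambda^1 < \lambda^2$. Substituting the identities from the previous paragraph yields $-\underline{x}^*(\lambda^1) \leq -\overline{x}^*(\lambda^2)$, which rearranges to $\underline{x}^*(\lambda^1) \geq \overline{x}^*(\lambda^2)$, exactly the claim. Since Theorem~\ref{theorem:gen2} already handles the boundary case $\lambda^1 = 0$, no separate argument is needed there, so the whole proof is just this one-line change of variables plus the bookkeeping of the $\max/\min$ swap; I expect that bookkeeping (keeping $\overline{\cdot}$ and $\underline{\cdot}$ straight under the sign flip) to be the only subtle step.
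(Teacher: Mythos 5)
Your proof is correct and is essentially the same as the paper's: both use the change of variables $\tilde{x}=-x$, $\tilde{f}(\tilde{x})=f(-\tilde{x})$, $\tilde{g}(\tilde{x},y)=g(-\tilde{x},y)$, $\tilde{\mathcal{S}}=\{(-x,y):(x,y)\in\mathcal{S}\}$ and then apply Theorem~\ref{theorem:gen2}. Your explicit bookkeeping of the $\max/\min$ swap ($\overline{\tilde{x}}^*(\lambda)=-\underline{x}^*(\lambda)$, $\underline{\tilde{x}}^*(\lambda)=-\overline{x}^*(\lambda)$) is a welcome clarification that the paper leaves implicit.
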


\begin{proof}
If we define $\tilde{x} = -x$, $\tilde{f}(x) = f(-x)$, $\tilde{g}(x,y) = g(-x,y)$, and $\tilde{\mathcal{S}} = \{(-x,y) : (x,y) \in \mathcal{S}\}$; then the result follows by applying Theorem \ref{theorem:gen2} to the optimization 
\begin{equation*}
\max_{\tilde{x},y}\{\lambda \tilde{f}(\tilde{x}) + \tilde{g}(\tilde{x},y) : (\tilde{x},y) \in \tilde{\mathcal{S}}\}. \qedhere
\end{equation*}
\end{proof}


\section{Model of Agents}

Here, we describe the utilities of the building manager and building owner.  Relevant features of these utilities are described, and the moral hazard present in one proposed incentive scheme is shown.

\subsection{Model of Building Manager}

The typical building is operated to maintain maximal occupant satisfaction and comfort, because building managers' job performance is partly evaluated on the basis of occupant complaints.  More specifically, we model the manager as maximizing the following utility
\begin{equation}
\label{eqn:no_incentive}
\max_{S_i,E_i} \{ \textstyle\sum_i S_i - \lambda \cdot W(S_i,E_i) : (S_i,E_i) \in \mathcal{O} \},
\end{equation}
where $\lambda \geq 0 $ is a constant that determines the personal trade-off for the building manager between the amount of work $W$ required to maintain the building at an operating point of $(S_i,E_i)$ and the occupant satisfaction $S_i$.  The subscripts refer to multiple iterations (e.g., months) of this process; without loss of generality, we assume two iterations: $i = 1,2$.

\begin{proposition}
\label{theorem:first}
If $0 \leq \lambda^1 < \lambda^2$, then the corresponding maximizers of (\ref{eqn:no_incentive}) are nonincreasing:
\begin{equation}
\label{eqn:ordering}
S^{\text{max}} \geq \underline{S}^*(\lambda^1) \geq \overline{S}^*(\lambda^2) \geq S^\omega.
\end{equation}
\end{proposition}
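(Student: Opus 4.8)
The plan is to first exploit the additive separability of the objective in (\ref{eqn:no_incentive}). Since $\sum_i S_i - \lambda W(S_i,E_i)$ decouples across the iterations $i=1,2$ and each iteration's feasible set is the same $\mathcal{O}$, maximizing the sum is equivalent to separately maximizing $S - \lambda W(S,E)$ over $(S,E)\in\mathcal{O}$ in each iteration; by symmetry of the two iterations the set of optimal $S$-values, and hence its extremes $\underline{S}^*(\lambda)$ and $\overline{S}^*(\lambda)$ (which are attained by Remark \ref{remark:two}), coincide with those of the single-iteration problem $\max_{S,E}\{S - \lambda W(S,E) : (S,E)\in\mathcal{O}\}$. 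So it suffices to analyze that scalar problem.

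Next I would recognize this single-iteration problem as an instance of Optimization \ref{opt:gen1} with $x = S$, $y = E$, $f(x) = x$, $g(x,y) = -W(x,y)$, and $\mathcal{S} = \mathcal{O}$: the region $\mathcal{O}$ is closed and bounded, $f$ and $g$ are continuous (the latter because $W$ is assumed continuous), and $f(x) = x$ is strictly increasing. With $\lambda \ge 0$, Theorem \ref{theorem:gen1} applies directly and yields the middle inequality $\underline{S}^*(\lambda^1) \ge \overline{S}^*(\lambda^2)$ for $0 \le \lambda^1 < \lambda^2$.

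For the left inequality $S^{\text{max}} \ge \underline{S}^*(\lambda^1)$, I would simply invoke the Linear Bounds structure of the model: $S^{\text{max}}$ is the largest satisfaction value attained anywhere on $\mathcal{O}$, so since $\underline{S}^*(\lambda^1)$ is the $S$-component of an actual maximizer (a feasible point), it is at most $S^{\text{max}}$.

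The right inequality $\overline{S}^*(\lambda^2) \ge S^\omega$ is the one that needs a short argument, and I expect it to be the main (if modest) obstacle. I would argue by contradiction: suppose $\overline{S}^*(\lambda^2) < S^\omega$ and let $(\bar S, \bar E)$ be a maximizer for $\lambda^2$ with $\bar S = \overline{S}^*(\lambda^2) < S^\omega$. Comparing its objective value with that at $(S^\omega, E^\omega)$, which is feasible and a global minimizer of $W$ over $\mathcal{O}$ by the Two Isolated Minima property, optimality gives $\bar S - \lambda^2 W(\bar S, \bar E) \ge S^\omega - \lambda^2 W(S^\omega, E^\omega)$, i.e. $\lambda^2\big(W(\bar S,\bar E) - W(S^\omega,E^\omega)\big) \le \bar S - S^\omega < 0$. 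Because $0 \le \lambda^1 < \lambda^2$ forces $\lambda^2 > 0$, dividing through gives $W(\bar S, \bar E) < W(S^\omega, E^\omega)$, contradicting the global minimality of $W$ at $(S^\omega, E^\omega)$. Chaining the three inequalities then yields (\ref{eqn:ordering}); the only points requiring care are the separability reduction in the first step and the strict positivity $\lambda^2 > 0$ used to divide in the last.
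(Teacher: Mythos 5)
Your proof is correct and takes essentially the same route as the paper's: the left inequality by feasibility, the middle inequality by instantiating Theorem \ref{theorem:gen1} with $f(S)=S$ and $g=-W$, and the right inequality by comparing a candidate maximizer with $S<S^\omega$ against the feasible point $(S^\omega,E^\omega)$ using the global minimality of $W$ there. Your contradiction argument is just the contrapositive of the paper's direct comparison (which in fact holds for every $\lambda\geq 0$, so the restriction $\lambda^2>0$ is not needed), and the explicit separability reduction you include is left implicit in the paper.
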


\begin{proof}
The upper bound $S^{\text{max}}$ is obvous, and so we first show that the maximizers satisfy $\underline{S}^*(\lambda) \geq S^\omega$ for any $\lambda \geq 0$.  By assumption $W(S,E) \geq W(S^\omega,E^\omega)$ for all $(S,E) \in \mathcal{O}$, which implies that $S^\omega - \lambda\cdot  W(S^\omega,E^\omega) > S - \lambda\cdot W(S,E)$ for all $S < S^\omega$.  Therefore, $\underline{S}^*(\lambda) \geq S^\omega$.  The remainder of the ordering result (\ref{eqn:ordering}) follows from Theorem \ref{theorem:gen1}.
\end{proof}

%

\begin{proposition}
\label{proposition:converge}
The maximizers converge
\begin{align}
\lim_{\lambda \rightarrow \infty} \underline{S}^*(\lambda) &= \lim_{\lambda \rightarrow \infty} \overline{S}^*(\lambda) = S^\omega \\
\lim_{\lambda \rightarrow \infty} \underline{E}^*(\lambda) &= \lim_{\lambda \rightarrow \infty} \overline{E}^*(\lambda) = E^\omega.
\end{align}
\end{proposition}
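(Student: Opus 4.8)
The plan is to reduce to a single iteration: the objective in (\ref{eqn:no_incentive}) is separable across $i$, so each pair $(S_i,E_i)$ independently solves $\max\{S - \lambda W(S,E) : (S,E) \in \mathcal{O}\}$. I would then prove the stronger statement that \emph{every} maximizer $(S(\lambda),E(\lambda))$ of this single-iteration problem converges to $(S^\omega,E^\omega)$ as $\lambda \to \infty$; the four claimed limits are the special cases obtained by taking, for each $\lambda$, a maximizer achieving $\underline{S}^*(\lambda)$, $\overline{S}^*(\lambda)$, $\underline{E}^*(\lambda)$, or $\overline{E}^*(\lambda)$.

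Two preliminary observations do most of the work. First, Proposition \ref{theorem:first} already gives that any maximizer satisfies $S^{\text{max}} \ge S(\lambda) \ge \underline{S}^*(\lambda) \ge S^\omega$. Second, let $w_0 = W(S^\omega,E^\omega) = W(S^\alpha,E^\alpha)$ denote the global minimum value of $W$ on $\mathcal{O}$. Comparing the optimal value at parameter $\lambda > 0$ with the value attained at the feasible point $(S^\omega,E^\omega)$ yields $S(\lambda) - \lambda W(S(\lambda),E(\lambda)) \ge S^\omega - \lambda w_0$, hence $w_0 \le W(S(\lambda),E(\lambda)) \le w_0 + (S^{\text{max}} - S^\omega)/\lambda$; in particular $W(S(\lambda),E(\lambda)) \to w_0$ as $\lambda \to \infty$.

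Now fix any sequence $\lambda_n \to \infty$ with corresponding maximizers $(S_n,E_n)$. Since $\mathcal{O}$ is compact, pass to a convergent subsequence $(S_n,E_n) \to (S^*,E^*) \in \mathcal{O}$. Continuity of $W$ together with the estimate above forces $W(S^*,E^*) = w_0$, so $(S^*,E^*)$ is one of the two isolated minima of $W$; but $S^* \ge S^\omega > S^\alpha$ by the first observation and the Minima Ordering, which rules out $(S^\alpha,E^\alpha)$ and leaves $(S^*,E^*) = (S^\omega,E^\omega)$. Since every subsequence of $(S_n,E_n)$ admits a further subsequence with this same limit, the whole sequence converges to $(S^\omega,E^\omega)$; as $\lambda_n$ and the choice of maximizers were arbitrary, all maximizers converge to $(S^\omega,E^\omega)$, which gives the proposition.

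The only point needing genuine care is the elimination of the ``wrong'' minimum $(S^\alpha,E^\alpha)$: an estimate showing that $W$ of the maximizer tends to its minimum does not by itself locate the maximizer, precisely because that minimum is attained at two points, so one must combine it with the order information $S^\alpha < S^\omega$ from the model and the lower bound $S(\lambda) \ge S^\omega$ inherited from Proposition \ref{theorem:first}. A minor secondary issue is bookkeeping the set-valued nature of the maximizers — working with arbitrary selections and subsequences rather than a single maximizing path — and invoking compactness of $\mathcal{O}$ to justify the subsequence extraction.
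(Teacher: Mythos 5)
Your proof is correct and follows essentially the same route as the paper: both arguments compare the optimal value with the value attained at the feasible point $(S^\omega,E^\omega)$ and use the ordering $S \geq S^\omega$ from Proposition \ref{theorem:first} (together with $S^\alpha < S^\omega$) to exclude the other global minimum of $W$. The only difference is packaging: the paper works directly with a neighborhood $\mathcal{B}(\rho)$ and an explicit threshold $\Lambda(\rho) = (S^{\text{max}} - S^\omega)/\epsilon(\rho)$, whereas you reach the same conclusion qualitatively via compactness of $\mathcal{O}$, continuity of $W$, and a subsequence argument.
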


\begin{proof}
We define an $L^1$ ball of radius $\rho$ about the point $(S^\omega,E^\omega)$ as the set
\begin{equation}\mathcal{B}(\rho) = \{(S,E) : S^\omega \leq S < S^\omega + \rho \textstyle\bigwedge |E - E^\omega| < \rho\}.
\end{equation}
Next, define the quantity
\begin{multline}
\epsilon(\rho) = \min\{W(S,E) - W(S^\omega,E^\omega) : S \geq S^\omega \textstyle\bigwedge \nonumber\\
(S,E) \in \mathcal{O} \setminus \mathcal{B}(\rho)\}.
\end{multline}
Note that the Berge maximum theorem implies that this minimum is attained on the corresponding constraint set.

By construction, whenever $\lambda > (S^{\text{max}} - S^\omega)/\epsilon(\rho)$, it holds that $S^\omega - \lambda\cdot  W(S^\omega,E^\omega) > S - \lambda\cdot W(S,E)$ for all $S \geq S^\omega \textstyle\bigwedge (S,E) \in \mathcal{O} \setminus \mathcal{B}(\rho)$.  Rewriting this --- given any $\rho > 0$, there exists $\Lambda(\rho) =  (S^{\text{max}} - S^\omega)/\epsilon(\rho)$ such that the maximum belongs to the set $\mathcal{B}(\rho)$ for all $\lambda > \Lambda(\rho)$.  This is the definition of convergence, and so the result is proved.
\end{proof}

\begin{proposition}
\label{proposition:zero}
The maximizers of (\ref{eqn:no_incentive}) for $\lambda = 0$ are given by $(S^*(0),E^*(0)) = \{(S^\text{max},E) : E^\text{opt} \leq E \leq E^3\}$.
\end{proposition}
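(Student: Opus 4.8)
The plan is to exploit the fact that at $\lambda = 0$ the manager's objective degenerates and the problem decouples across iterations, reducing everything to a one-shot maximization of $S$ over the feasible region $\mathcal{O}$; the conclusion then follows by reading off the right boundary of $\mathcal{O}$.

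First I would substitute $\lambda = 0$ into (\ref{eqn:no_incentive}), so the objective becomes $\sum_i S_i = S_1 + S_2$, and note that the constraints $(S_1,E_1)\in\mathcal{O}$ and $(S_2,E_2)\in\mathcal{O}$ involve disjoint blocks of variables. Hence the joint maximum separates: a point is a maximizer of (\ref{eqn:no_incentive}) if and only if each $(S_i,E_i)$ lies in $\arg\max\{S : (S,E)\in\mathcal{O}\}$. Attainment of this maximum is guaranteed by the Berge maximum theorem exactly as in Remark \ref{remark:one}, since $\mathcal{O}$ is compact. It therefore suffices to identify this single-iteration argmax set.

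Next I would invoke the ``Linear Bounds'' property of the model: the points $(S^{\text{min}},E^{\text{min}})$, $(S^{\text{max}},E^{\text{opt}})$, $(S^{\text{max}},E^3)$, and $(S^4,E^{\text{max}})$ form the right boundary of the polygon $\mathcal{O}$. Since $\mathcal{O}$ lies weakly to the left of its right boundary, every $(S,E)\in\mathcal{O}$ satisfies $S \le S^{\text{max}}$, and the set of feasible points attaining $S = S^{\text{max}}$ is precisely the boundary segment joining the two consecutive vertices $(S^{\text{max}},E^{\text{opt}})$ and $(S^{\text{max}},E^3)$, namely $\{(S^{\text{max}},E) : E^{\text{opt}} \le E \le E^3\}$, where $E^{\text{opt}} \le E^3$ is the order in which these vertices occur along the boundary. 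Combining this with the separability observation yields $(S^*(0),E^*(0)) = \{(S^{\text{max}},E) : E^{\text{opt}} \le E \le E^3\}$, as claimed.

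I expect the only real subtlety to be this second step: one must argue carefully, from the polygon description and Fig. \ref{fig:main_points}, that $(S^{\text{max}},E^{\text{opt}})$ and $(S^{\text{max}},E^3)$ are joined by a vertical edge of constant $S$-coordinate $S^{\text{max}}$ and that no other point of $\mathcal{O}$ (interior or boundary) exceeds this value --- i.e., that $S^{\text{max}}$ genuinely equals $\max\{S : (S,E)\in\mathcal{O}\}$. Everything else is just the elementary fact that an unweighted sum of the $S_i$ over independent copies of the same feasible set is maximized coordinatewise.
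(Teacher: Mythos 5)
Your proof is correct and takes essentially the same route as the paper, whose proof is simply the one-line statement that the result ``follows by direct maximization'' of (\ref{eqn:no_incentive}); you have merely spelled out the details (separability across $i$ at $\lambda=0$, and reading off the vertical edge of the right boundary where $S=S^{\text{max}}$) that the paper leaves implicit.
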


\begin{proof}
This follows by direct maximization of (\ref{eqn:no_incentive}).
\end{proof}

\begin{remark}
The intuition of Propositions \ref{theorem:first}, \ref{proposition:converge}, and \ref{proposition:zero} is that satisfaction $S$ decreases as the building manager puts more emphasis on reducing his own work $W(S,E)$.  At the extreme of zero emphasis on work $\lambda = 0$, the building will be configured to ensure maximum satisfaction $S^* = S^\text{max}$; and at the other extreme of all emphasis on work $\lambda \rightarrow \infty$, the building will be configured to achieve the global minimum of work that has the higher level of satisfaction $(S^\omega,E^\omega)$.
\end{remark}

Based on the intuition of these propositions, we can examine the operating point of the building as actually configured.  This point is marked in Fig. \ref{figure:real} with a cross.  We can estimate the corresponding $\lambda$ value as the one for which $(S^*(\lambda),E^*(\lambda))$ is closest to the actual operating point.  This is useful for conducting further numerical analysis.




\subsection{Model of Building Owner}
\label{section:owner}

\begin{figure*}
\begin{subfigure}[b]{0.33\textwidth}
\centering
\includegraphics{main_points.pdf}
\caption{Key Points}
\label{fig:main_points3}
 \end{subfigure}
\begin{subfigure}[b]{0.33\textwidth}
\centering
\includegraphics{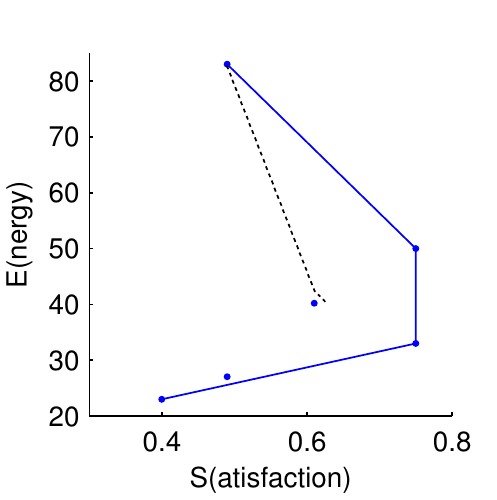}
\caption{Period 1 Trace of (\ref{eqn:bad_inc})}
\label{fig:manager_trace3}
 \end{subfigure}
\begin{subfigure}[b]{0.33\textwidth}
\centering
\includegraphics{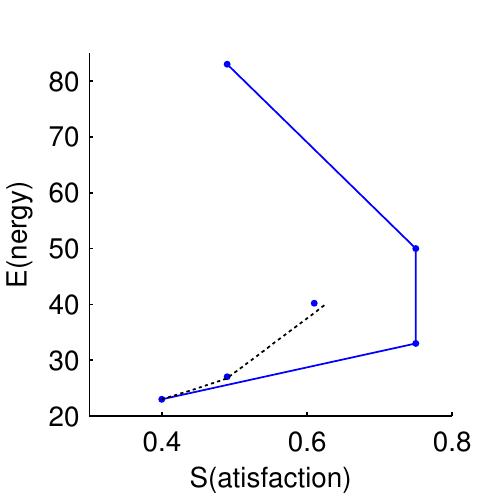}
\caption{Period 2 Trace of (\ref{eqn:bad_inc})}
\label{figure:real3}
 \end{subfigure}

\caption{\label{fig:monte_model3}The key points of the static, operational model as determined by the Monte Carlo analysis are shown.  Furthermore, traces (indicted by the dotted line) of operating points as $\gamma$ is varied are shown for both period 1 and period 2 of operation of the incentive scheme in (\ref{eqn:bad_inc}).}
\end{figure*}

From the perspective of the building owner, the problem with the utility of the building manager (\ref{eqn:no_incentive}) is that it does not take into account the energy consumption of the building.  The owner's preference is that the following utility be maximized
\begin{equation}
\begin{aligned}
\max_{S_i,E_i} \{ \textstyle\sum_i S_i - \mu E_i : (S_i,E_i) \in \mathcal{O} \},
\end{aligned}
\end{equation}
where $\mu > 0$ is a constant that determines the trade-off for the building owner between the amount of energy used and the occupant satisfaction.

\begin{proposition}
The maximizers are given by the following multi-valued function
\begin{multline}
(S^*_i(\mu),E^*_i(\mu)) = \\
\begin{cases} (S^\text{max},E) : E^\text{opt} \leq E \leq E^3 & \text{if } \mu = 0\\
(S^{\text{max}},E^\text{opt}) & \text{if } 0 < \mu < \textstyle\frac{1}{m} \\
(S,mS + k) : S^\text{min} \leq S \leq S^\text{max} & \text{if } \mu = \textstyle\frac{1}{m} \\
(S^{\text{min}},E^{\text{min}}) & \text{if } \mu > \textstyle\frac{1}{m} \end{cases},
\end{multline}
where $m = (E^\text{opt} - E^\text{min})/(S^\text{max} - S^\text{min})$ and $k = E^\text{min} - mS^\text{min}$.
\end{proposition}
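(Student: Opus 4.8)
The plan is to use that the owner's objective $\sum_i(S_i-\mu E_i)$ is separable across iterations and, for a single iteration, linear in $(S,E)$, so the problem reduces to a one-parameter family of linear programs over the nonconvex region $\mathcal{O}$. First I would note that, since the constraint $(S_i,E_i)\in\mathcal{O}$ is imposed separately for each $i$, a tuple is optimal if and only if each $(S_i,E_i)$ maximizes $S-\mu E$ over $\mathcal{O}$; thus the stated multi-valued formula is the description of $\arg\max\{S-\mu E:(S,E)\in\mathcal{O}\}$ replicated over $i$, and existence of maximizers follows from the Berge maximum theorem \cite{berge1963} since $\mathcal{O}$ is compact. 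The case $\mu=0$ is then immediate: the problem is $\max\{S:(S,E)\in\mathcal{O}\}$, whose value $S^{\max}$ is attained exactly on the vertical slice $\{(S^{\max},E)\in\mathcal{O}\}$, which by the Linear Bounds assumption (the boundary vertices $(S^{\max},E^{\text{opt}})$ and $(S^{\max},E^3)$) equals $\{(S^{\max},E):E^{\text{opt}}\le E\le E^3\}$.

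For $\mu>0$, the crucial step is to localize the maximizers to the segment $L=\{(S,mS+k):S^{\min}\le S\le S^{\max}\}$. Any maximizer must be non-dominated for the pair of objectives ``increase $S$, decrease $E$'': if $(S_1,E_1)\in\mathcal{O}$ had $S_1\ge S_0$, $E_1\le E_0$, and $(S_1,E_1)\ne(S_0,E_0)$, then $(S_1-\mu E_1)-(S_0-\mu E_0)=(S_1-S_0)+\mu(E_0-E_1)>0$ since $\mu>0$, so $(S_0,E_0)$ is not optimal. Then, reading the geometry of Fig.~\ref{fig:main_points3} off the Linear Bounds assumption: over $S\in[S^{\min},S^{\max}]$ the lower-right boundary of $\mathcal{O}$ is exactly $L$; the remaining boundary arcs (the rest of the vertical edge up to $(S^{\max},E^3)$ and the edge to $(S^4,E^{\max})$) consist of points each dominated by $(S^{\max},E^{\text{opt}})$; every point of $\mathcal{O}$ lying strictly above $L$ at a given $S$-coordinate is dominated by the point of $L$ at that coordinate; and no point of $L$ is dominated, because $m>0$. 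Hence the non-dominated set of $\mathcal{O}$ is precisely $L$, so for $\mu>0$ it suffices to maximize over $L$.

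On $L$, substituting $E=mS+k$ makes the objective $(1-\mu m)S-\mu k$, linear in $S$ on $[S^{\min},S^{\max}]$, and a sign analysis of $1-\mu m$ closes the proof. If $0<\mu<1/m$ it is strictly increasing, so the unique maximizer is $S=S^{\max}$, i.e.\ $(S^{\max},mS^{\max}+k)=(S^{\max},E^{\text{opt}})$ after using $k=E^{\min}-mS^{\min}$. If $\mu=1/m$ it is constant, so all of $L$ is optimal, giving the third case. If $\mu>1/m$ it is strictly decreasing, so the unique maximizer is $S=S^{\min}$, i.e.\ $(S^{\min},E^{\min})$. The four regimes $\mu=0$, $0<\mu<1/m$, $\mu=1/m$, $\mu>1/m$ partition $[0,\infty)$ and produce exactly the stated function.

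The only non-mechanical part is the identification of the non-dominated set of $\mathcal{O}$ with $L$; everything else is the separability remark and a one-variable linear optimization. That identification relies on the qualitative shape encoded in the Linear Bounds assumption together with the orderings $E^{\min}<E^{\text{opt}}<E^3<E^{\max}$ and $S^4<S^{\max}$ (implicit in the model and visible in Fig.~\ref{fig:main_points3}), which I would state explicitly before invoking them. As a sanity check, the resulting $\underline{S}^*(\mu)$ and $\overline{S}^*(\mu)$ are nonincreasing in $\mu$, consistent with Theorem~\ref{theorem:gen1} applied with $f(S)=S$, $g(S,E)=-E$, and $\lambda=\mu$.
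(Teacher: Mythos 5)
Your proof is correct and follows essentially the same route as the paper's: both reduce the problem to the lower boundary segment $E = mS + k$ over $S \in [S^{\min},S^{\max}]$ (you via a Pareto-dominance argument, the paper via a nested maximization in which $E$ is minimized at each fixed $S$) and then perform the same sign analysis of $1-\mu m$ on the resulting one-variable linear objective. The only notable difference is that you treat $\mu=0$ separately, which is slightly cleaner since the ``strictly decreasing in $E$'' (or strict-dominance) step used for the reduction genuinely requires $\mu>0$.
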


\begin{proof}
Note that we can rewrite the maximization as a nested maximization problem
\begin{multline}
\max_{S_i,E_i} \{S_i - \mu E_i : (S_i,E_i) \in \mathcal{O}\} = \\ \max_{S_i}\{\max_{E_i}\{S_i - \mu E_i : (S_i,E_i) \in \mathcal{O} \} : (S_i, \cdot) \in \mathcal{O} \}.
\end{multline}
Now for a fixed value of $S_i$, the objective is strictly decreasing in $E_i$.  This means that the objective is maximized along the bottom boundary of the feasible set of operating points.  Specifically,
\begin{align}
E_i^*(S_i) & = \arg \max_{E_i}\{S_i - \mu E_i : (S_i,E_i) \in \mathcal{O} \} \\
& = mS_i + k.
\end{align}
Iterating on the nested maximization problem gives that the maximum is
\begin{equation}
S_i^* = \arg \max_{S_i}\{ S_i - \mu m S_i - \mu k : S_{\text{min}} \leq S_i \leq S_{\text{max}} \}.
\end{equation}
If $1 - \mu m > 0$ (or equivalently $\mu < 1/m$), then the objective is strictly increasing in $S_i$.  Alternatively, if $1 - \mu m = 0$ (or $\mu = 1/m$), then the objective is independent of $S_i$.  Lastly, if $1 - \mu m < 0$ (or also $\mu > 1/m$), then the objective is strictly decreasing in $S_i$.  The result follows by appropriate maximization of the corresponding objectives.
\end{proof}

\begin{remark}
This result is consistent with Theorem \ref{theorem:gen1}.
\end{remark}

\begin{remark}
The maximizers that correspond to $\mu = 0$ and $\mu = 1/m$ are not realistic reflections of the preference of the building owner.  This is because $\mu = 0$ corresponds to no interest in energy consumption, and $\mu = 1/m$ corresponds to a non-robust set of maximizers.  By non-robust, we mean that arbitrarily small perturbations of $\mu$ from $1/m$ lead to a qualitatively large change in the set of maximizers.
\end{remark}

The only maximizer of interest is $(S^{\text{max}},E^\text{opt})$; as a result, the only interesting range of values is $\mu : 0 < \mu < 1/m$.  This is because the point $(S^{\text{min}},E^{\text{min}})$ is equivalent to the maximizer of the optimization problem
\begin{equation}
\max_{S_i,E_i} \{ \textstyle\sum_i - E_i : (S_i,E_i) \in \mathcal{O} \},
\end{equation}
which only tries to minimize energy usage.  This is not reflective of the typical building owner, because the owners have external reasons for ensuring that occupants are comfortable.  For instance, satisfied occupants will be more productive with their work; furthermore, satisfied occupants will be more likely to remain within the building rather than relocating to another location.

\subsection{Moral Hazard with Baselining}
\label{sect:moral}

To remedy the discrepancy between the utility of owners and managers, there is interest in incentivizing building managers to consider energy savings.  One such proposal has the structure of a Stackelberg game (i.e., leader-follower pattern).  The owner first measures the energy consumption of the building during an initial ``baselining'' period $E_1$.  

After this initial period, the owner provides a monetary payment (or fine) with value given by the linear function $\gamma\cdot(E_1-E_2)$.  If less energy than the baseline energy is used $E_2 < E_1$, then a payment is given to the building manager; otherwise, when more energy than the baseline is used $E_2> E_1$, then the manager must pay a fine.  For this form of incentives, the corresponding utility is
\begin{multline}
\label{eqn:bad_inc}
\max_{S_i,E_i} \{ \gamma\cdot(E_1-E_2) + \textstyle\sum_i S_i - \lambda \cdot W(S_i,E_i) : \\
(S_i,E_i) \in \mathcal{O} \}.
\end{multline}

\begin{proposition}
\label{proposition:moral1}
Let $\lambda \geq 0$ be a fixed constant.  If $0 \leq \gamma^1 < \gamma^2$, then the corresponding maximizers of (\ref{eqn:bad_inc}) denoted $E^*_1$ are nondecreasing
\begin{equation}
E^{\text{max}} \geq \underline{E}^*_1(\gamma^2) \geq \overline{E}^*_1(\gamma^1),
\end{equation}
and the maximizers denoted $E^*_2$ are nonincreasing
\begin{equation}
\underline{E}^*_2(\gamma^1) \geq \overline{E}^*_2(\gamma^2) \geq E^\text{min}.
\end{equation}
\end{proposition}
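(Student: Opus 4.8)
The plan is to exploit the fact that the objective in (\ref{eqn:bad_inc}) is \emph{separable} across the two iterations $i = 1,2$. Writing out the sum, the objective equals $\bigl[\gamma E_1 + S_1 - \lambda W(S_1,E_1)\bigr] + \bigl[-\gamma E_2 + S_2 - \lambda W(S_2,E_2)\bigr]$, and the constraint $(S_i,E_i)\in\mathcal{O}$ couples only the variables carrying the same index $i$. Hence the maximization decouples into two independent subproblems, and the full maximizer set is the Cartesian product of the two subproblem maximizer sets. In particular, $E^*_1$ is governed entirely by the period-1 subproblem $\max\{\gamma E_1 + S_1 - \lambda W(S_1,E_1) : (S_1,E_1)\in\mathcal{O}\}$ and $E^*_2$ entirely by the period-2 subproblem $\max\{-\gamma E_2 + S_2 - \lambda W(S_2,E_2) : (S_2,E_2)\in\mathcal{O}\}$, with $\lambda$ held fixed throughout.

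Next I would cast the period-1 subproblem in the form of Optimization \ref{opt:gen2} by taking $x = E_1$, $y = S_1$, the varying parameter to be $\gamma$, $f(x) = E_1$, and $g(x,y) = S_1 - \lambda W(S_1,E_1)$. Since $\lambda$ is fixed and $W$ is continuous, $g$ is continuous; $\mathcal{O}$ is closed and bounded; and $f$ is strictly increasing in $E_1$, so all hypotheses of Theorem \ref{theorem:gen2} hold. That theorem then gives $\overline{E}^*_1(\gamma^1) \leq \underline{E}^*_1(\gamma^2)$ for $0 \leq \gamma^1 < \gamma^2$, and combining this with the trivial bound $E_1 \leq E^{\text{max}}$ on $\mathcal{O}$ yields the first chain $E^{\text{max}} \geq \underline{E}^*_1(\gamma^2) \geq \overline{E}^*_1(\gamma^1)$.

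For the period-2 subproblem I would again invoke Optimization \ref{opt:gen2}, now with $x = E_2$, $y = S_2$, varying parameter $\gamma$, $g(x,y) = S_2 - \lambda W(S_2,E_2)$, but $f(x) = -E_2$, which is strictly decreasing in $E_2$. Corollary \ref{corollary:gen2} then gives $\underline{E}^*_2(\gamma^1) \geq \overline{E}^*_2(\gamma^2)$ for $0 \leq \gamma^1 < \gamma^2$, and the trivial bound $E_2 \geq E^{\text{min}}$ on $\mathcal{O}$ completes the second chain $\underline{E}^*_2(\gamma^1) \geq \overline{E}^*_2(\gamma^2) \geq E^{\text{min}}$.

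The substance of the argument is just the separability observation together with the correct identification of which generic result to apply to each half; there is no real obstacle beyond this bookkeeping. The one point to state carefully is that the maximizer sets referenced in the proposition---the projections of each subproblem's maximizer set onto its $E_1$- or $E_2$-axis---are precisely the sets $x^*(\cdot)$ of Remark \ref{remark:two} applied to the respective subproblems, so that $\overline{E}^*_i(\cdot)$ and $\underline{E}^*_i(\cdot)$ are well-defined via the Berge maximum theorem and the closed map lemma exactly as there, and the separability ensures these projections do not depend on the other period's variables.
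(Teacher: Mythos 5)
Your proposal is correct and follows exactly the route the paper takes: its proof is simply the citation of Theorem \ref{theorem:gen2} (period 1, $f(E_1)=E_1$ strictly increasing) and Corollary \ref{corollary:gen2} (period 2, $f(E_2)=-E_2$ strictly decreasing), which is precisely the application you spell out. Your additional remarks on separability of (\ref{eqn:bad_inc}) across $i=1,2$ and on the well-definedness of $\overline{E}^*_i$, $\underline{E}^*_i$ via Remark \ref{remark:two} just make explicit what the paper leaves implicit.
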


\begin{proof}
This follows from Theorem \ref{theorem:gen2} and Corollary \ref{corollary:gen2}.
\end{proof}

\begin{proposition}
\label{proposition:moral2}
The maxmizers converge
\begin{align*}
\lim_{\gamma \rightarrow \infty} \underline{S}^*_1(\gamma) &= \lim_{\gamma \rightarrow \infty} \overline{S}^*_1(\gamma) = S^4 \\
\lim_{\gamma \rightarrow \infty} \underline{E}^*_1(\gamma) &= \lim_{\gamma \rightarrow \infty} \overline{E}^*_1(\gamma) = E^\text{max} \\
\lim_{\gamma \rightarrow \infty} \underline{S}^*_2(\gamma) &= \lim_{\gamma \rightarrow \infty} \overline{S}^*_2(\gamma) = S^\text{min} \\
\lim_{\gamma \rightarrow \infty} \underline{E}^*_2(\gamma) &= \lim_{\gamma \rightarrow \infty} \overline{E}^*_2(\gamma) = E^\text{min}.
\end{align*}
\end{proposition}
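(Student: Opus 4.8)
The plan is to follow the template of the proof of Proposition \ref{proposition:converge}, after first observing that the objective in (\ref{eqn:bad_inc}) decouples across the two iterations. Writing $\gamma\cdot(E_1-E_2)=\gamma E_1-\gamma E_2$, the objective is the sum of $h_1(S_1,E_1):=\gamma E_1+S_1-\lambda W(S_1,E_1)$ and $h_2(S_2,E_2):=-\gamma E_2+S_2-\lambda W(S_2,E_2)$, where $h_1$ depends only on the first iteration's variables and $h_2$ only on the second's. Hence (\ref{eqn:bad_inc}) splits into the two independent problems $\max\{h_1(S_1,E_1):(S_1,E_1)\in\mathcal{O}\}$ and $\max\{h_2(S_2,E_2):(S_2,E_2)\in\mathcal{O}\}$, and it is enough to study each as $\gamma\to\infty$.

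For the first problem I would argue as follows. Since $\mathcal{O}$ is compact and $W$ continuous, $S-\lambda W(S,E)$ attains a finite minimum $L$ and maximum $U$ on $\mathcal{O}$. Given $\rho>0$, let $\mathcal{B}(\rho)=\{(S,E):|S-S^4|<\rho,\ |E-E^\text{max}|<\rho\}$ and set $\epsilon(\rho)=\min\{E^\text{max}-E:(S,E)\in\mathcal{O}\setminus\mathcal{B}(\rho)\}$; this minimum is attained by the Berge maximum theorem, and, crucially, it is \emph{strictly positive} because the Linear Bounds feature of the model (see Fig. \ref{fig:main_points}) makes $(S^4,E^\text{max})$ the unique point of $\mathcal{O}$ at which $E=E^\text{max}$, so that $E<E^\text{max}$ throughout $\mathcal{O}\setminus\mathcal{B}(\rho)$. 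Then for any $\gamma>(U-L)/\epsilon(\rho)$ and any $(S,E)\in\mathcal{O}\setminus\mathcal{B}(\rho)$ one has $h_1(S,E)\le\gamma(E^\text{max}-\epsilon(\rho))+U<\gamma E^\text{max}+L\le h_1(S^4,E^\text{max})$, so no point outside $\mathcal{B}(\rho)$ can be a maximizer. Thus the full set of maximizers lies in $\mathcal{B}(\rho)$ once $\gamma>\Lambda(\rho):=(U-L)/\epsilon(\rho)$, and since $\rho$ was arbitrary this is precisely the statement that $\underline{S}^*_1(\gamma),\overline{S}^*_1(\gamma)\to S^4$ and $\underline{E}^*_1(\gamma),\overline{E}^*_1(\gamma)\to E^\text{max}$.

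The second problem is identical after the substitution $E\mapsto-E$: the term $-\gamma E_2$ now drives $E_2$ to its minimum over $\mathcal{O}$, which by the same Linear Bounds feature is attained uniquely at $(S^\text{min},E^\text{min})$; running the same ball-and-gap estimate about that point yields $\underline{S}^*_2(\gamma),\overline{S}^*_2(\gamma)\to S^\text{min}$ and $\underline{E}^*_2(\gamma),\overline{E}^*_2(\gamma)\to E^\text{min}$. One can alternatively note that Proposition \ref{proposition:moral1} already forces $E^*_1$ to be nondecreasing and $E^*_2$ nonincreasing in $\gamma$, which makes convergence of the energy coordinates unsurprising, but the ball argument is still what pins down the satisfaction coordinates.

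The only genuinely nontrivial point — everything else being bookkeeping parallel to Proposition \ref{proposition:converge} — is the claim $\epsilon(\rho)>0$, i.e., that $(S^4,E^\text{max})$ and $(S^\text{min},E^\text{min})$ are the \emph{unique} energy-maximizing and energy-minimizing points of $\mathcal{O}$. Were $\mathcal{O}$ to contain a horizontal edge at height $E^\text{max}$ or $E^\text{min}$, the gap would collapse and one would recover convergence of $E^*_i$ but not of $S^*_i$. This uniqueness is exactly what the ``Linear Bounds'' assumption on the shape of $\mathcal{O}$ supplies, so I would invoke that modeling assumption explicitly rather than relying on compactness alone.
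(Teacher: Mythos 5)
Your proposal is correct and follows essentially the same route as the paper, whose proof is simply the ball-and-gap argument of Proposition \ref{proposition:converge} transplanted to the dominating terms $\gamma E_1$ and $-\gamma E_2$ after the objective decouples across the two periods. The only added value in your write-up is that you make explicit what the paper leaves implicit — the decoupling and the strict positivity of $\epsilon(\rho)$, i.e.\ that the Linear Bounds feature makes $(S^4,E^\text{max})$ and $(S^\text{min},E^\text{min})$ the unique energy-extremizing points of $\mathcal{O}$, without which only the $E$-coordinates would be pinned down.
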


\begin{proof}
The proof is identical to that of Proposition \ref{proposition:converge}.
\end{proof}

\begin{remark}
The intuition of Propositions \ref{proposition:moral1} and \ref{proposition:moral2} is that a moral hazard occurs with the proposed incentive scheme in (\ref{eqn:bad_inc}).  During the first stage of baselining, the energy consumption will generally by greater than the true operating point of the building without incentives.  For high enough incentives, the energy consumption will be pushed towards the maximum possible $E^\text{max}$.  At the second stage, the energy consumption will generally be lower than the true operating point of the building without incentives; however, for very high levels of incentives, the building will be pushed towards a point in which satisfaction is not prioritized $(S^\text{min},E^\text{min})$.
\end{remark}

\section{Positive Incentive Design}

\begin{figure*}
\begin{subfigure}[b]{0.33\textwidth}
\centering
\includegraphics{main_points.pdf}
\caption{Key Points}
\label{fig:main_points4}
 \end{subfigure}
\begin{subfigure}[b]{0.33\textwidth}
\centering
\includegraphics{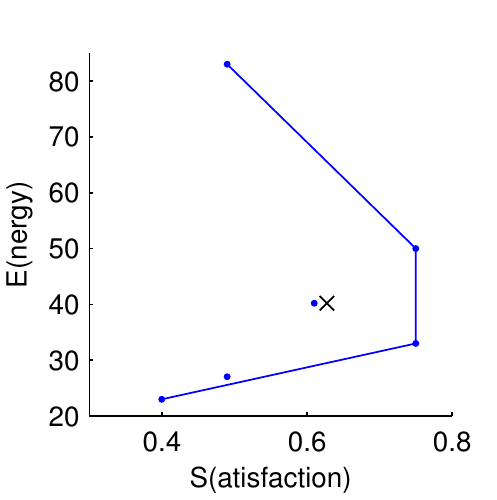}
\caption{Period 1 Trace of (\ref{eqn:our_inc})}
\label{fig:manager_trace4}
 \end{subfigure}
\begin{subfigure}[b]{0.33\textwidth}
\centering
\includegraphics{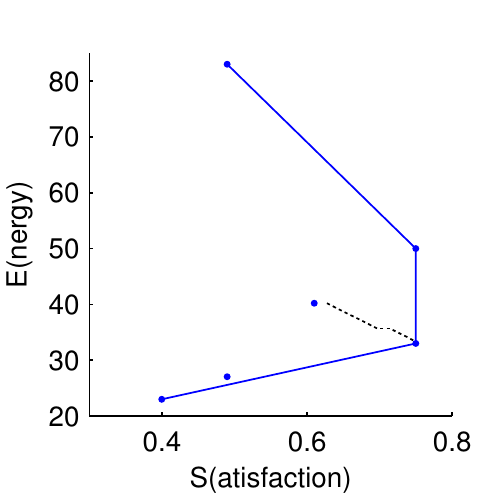}
\caption{Period 2 Trace of (\ref{eqn:our_inc})}
\label{figure:real4}
 \end{subfigure}

\caption{\label{fig:monte_model4}The key points of the static, operational model as determined by the Monte Carlo analysis are shown.  Furthermore, traces (indicated by a cross or the dotted line) of operating points as $\gamma$ is varied are shown for both period 1 and period 2 of operation of the incentive scheme in (\ref{eqn:our_inc}).}
\end{figure*}

As shown in Sect. \ref{sect:moral}, the proposed incentive scheme (\ref{eqn:bad_inc}) will encourage moral hazards; furthermore, it will likely encourage suboptimal operation in which satisfaction will also be minimized in order to reduce energy usage.  One interesting problem is to design an incentive that will encourage more efficient operation with high quality of service without the associated problems of moral hazards.

\subsection{Proposal and Properties}

We propose an incentive proportional to $S_2 - \kappa E_2$, with
\begin{equation}
\label{eqn:kappa}
\kappa = \min\{S^\text{min}/E^\text{max},(S^\text{max}-S^\text{min})/E^\text{max}\}.
\end{equation}
Note that $0 < \kappa < 1/m$, where $m$ is as defined in Sect. \ref{section:owner}.  With this incentive, the utility becomes
\begin{multline}
\label{eqn:our_inc}
\max_{S_i,E_i} \{ \gamma\cdot(S_2 - \kappa E_2) + \textstyle\sum_i S_i - \lambda \cdot W(S_i,E_i) : \\
(S_i,E_i) \in \mathcal{O} \},
\end{multline}
where $\gamma > 0$ is a constant.  This incentive is favorable.

\begin{proposition}
\label{proposition:positive}
If $(S_2,E_2) \in \mathcal{O}$, then $S_2 - \kappa E_2 \geq 0$.
\end{proposition}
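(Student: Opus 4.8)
The plan is to bound $S_2 - \kappa E_2$ from below using only the ``bounding box'' constraints that membership in $\mathcal{O}$ imposes, together with the first term in the minimum defining $\kappa$ in (\ref{eqn:kappa}). First I would record that every feasible operating point satisfies $S^\text{min} \le S \le S^\text{max}$ and $E^\text{min} \le E \le E^\text{max}$: this is immediate from the Linear Bounds assumption on the geometry of $\mathcal{O}$ (the listed extreme points form its right boundary, and $\mathcal{O}$ is contained in the rectangle $[S^\text{min},S^\text{max}]\times[E^\text{min},E^\text{max}]$), combined with the modeling conventions that satisfaction and energy are nonnegative quantities, so in particular $S^\text{min} > 0$, $E^\text{max} > 0$, and $E_2 \ge 0$.

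Next, from (\ref{eqn:kappa}) we have $\kappa \le S^\text{min}/E^\text{max}$ (the minimum of two terms is no larger than either term), and $\kappa > 0$. Since $0 \le E_2 \le E^\text{max}$, I would simply chain the inequalities
\[
\kappa E_2 \;\le\; \kappa E^\text{max} \;\le\; \frac{S^\text{min}}{E^\text{max}}\, E^\text{max} \;=\; S^\text{min} \;\le\; S_2,
\]
and rearrange to obtain the claimed $S_2 - \kappa E_2 \ge 0$ for every $(S_2,E_2) \in \mathcal{O}$.

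There is essentially no obstacle here; the only point worth flagging is that although $\kappa$ is defined as a minimum of two quantities, only the bound $\kappa \le S^\text{min}/E^\text{max}$ is needed for this proposition — the second term $(S^\text{max}-S^\text{min})/E^\text{max}$ in (\ref{eqn:kappa}) earns its keep elsewhere (for instance, in ensuring $\kappa < 1/m$ and in the order-preserving arguments applied to (\ref{eqn:our_inc}) via Theorems \ref{theorem:gen1} and \ref{theorem:gen2}). I would just make sure the sign conventions are stated explicitly, since the argument relies on $S^\text{min} > 0$, $E^\text{max} > 0$, and $E_2 \ge 0$, all of which hold for the BRITE-S model.
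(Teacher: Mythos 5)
Your proof is correct and follows essentially the same route as the paper's: both use $\kappa \le S^\text{min}/E^\text{max}$ together with $E_2 \le E^\text{max}$ to get $S_2 - \kappa E_2 \ge S_2 - S^\text{min}$, and then conclude from $S_2 \ge S^\text{min}$. Your version merely makes explicit the sign conventions ($E_2 \ge 0$, $E^\text{max} > 0$) that the paper leaves implicit.
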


\begin{proof}
Because $E_2 \leq E^\text{max}$, it must hold that $S_2 - \kappa E_2 \geq S_2 -  S^\text{min}$.  The result follows by noting that $S_2 \geq S^{\text{min}}$.
\end{proof}

\begin{proposition}
\label{proposition:our_converge}
The maximizers converge
\begin{align*}
\lim_{\gamma \rightarrow \infty} \underline{S}^*_2(\gamma) &= \lim_{\gamma \rightarrow \infty} \overline{S}^*_2(\gamma) = S^\text{max} \\
\lim_{\gamma \rightarrow \infty} \underline{E}^*_2(\gamma) &= \lim_{\gamma \rightarrow \infty} \overline{E}^*_2(\gamma) = E^\text{opt}.
\end{align*}
\end{proposition}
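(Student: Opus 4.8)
The plan is to mimic the argument of Proposition~\ref{proposition:converge}, after first isolating the decisive structural observation. Since the $\gamma$-term in (\ref{eqn:our_inc}) involves only $(S_2,E_2)$, the $(S_1,E_1)$ block decouples and is irrelevant, so it suffices to analyze $\max_{S_2,E_2}\{\gamma(S_2-\kappa E_2)+S_2-\lambda W(S_2,E_2):(S_2,E_2)\in\mathcal{O}\}$. As $\gamma\to\infty$ the term $\gamma(S_2-\kappa E_2)$ dominates, so the key fact I need is that $(S^{\text{max}},E^{\text{opt}})$ is the \emph{unique} maximizer of $S_2-\kappa E_2$ over $\mathcal{O}$. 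This is exactly the content of the building-owner analysis in Sect.~\ref{section:owner}: because $0<\kappa<1/m$ (noted just after (\ref{eqn:kappa})), $\kappa$ lies in the regime $0<\mu<1/m$ for which the unique maximizer of $S-\mu E$ over $\mathcal{O}$ is $(S^{\text{max}},E^{\text{opt}})$ --- nest the maximization over $E$ first along the linear lower boundary $E=mS+k$, then maximize the strictly increasing residual $(1-\kappa m)S-\kappa k$ over $S\in[S^{\text{min}},S^{\text{max}}]$.

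Next, following the proof of Proposition~\ref{proposition:converge} essentially verbatim, I would fix $\rho>0$, define a small one-sided $L^1$ ball $\mathcal{B}(\rho)=\{(S,E):S^{\text{max}}-\rho<S\le S^{\text{max}}\ \wedge\ |E-E^{\text{opt}}|<\rho\}$ about $(S^{\text{max}},E^{\text{opt}})$, and set $\epsilon(\rho)=\min\{(S^{\text{max}}-\kappa E^{\text{opt}})-(S-\kappa E):(S,E)\in\mathcal{O}\setminus\mathcal{B}(\rho)\}$. The Berge maximum theorem guarantees this minimum is attained; moreover $\epsilon(\rho)>0$, since if it vanished the minimizer would be a second maximizer of $S-\kappa E$ over $\mathcal{O}$, contradicting the uniqueness just established.

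Finally I would bound the residual terms: since $\mathcal{O}$ is compact, $W$ is continuous with $W\in[0,1]$, and $0\le S\le S^{\text{max}}$ on $\mathcal{O}$, the quantity $S_2-\lambda W(S_2,E_2)$ varies over an interval of some finite length $L$ on $\mathcal{O}$. Comparing the objective at $(S^{\text{max}},E^{\text{opt}})$ with its value at any $(S_2,E_2)\in\mathcal{O}\setminus\mathcal{B}(\rho)$, the difference is at least $\gamma\,\epsilon(\rho)-L$. Hence for every $\gamma>\Gamma(\rho):=L/\epsilon(\rho)$ the maximizer lies in $\mathcal{B}(\rho)$, which is precisely the definition of convergence to $(S^{\text{max}},E^{\text{opt}})$ as $\gamma\to\infty$.

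The only genuinely new ingredient, and the step I expect to require the most care, is the uniqueness claim in the first paragraph --- pinning down $(S^{\text{max}},E^{\text{opt}})$ as the sole maximizer of $S-\kappa E$ on the nonconvex polygon $\mathcal{O}$, for which the strict inequality $\kappa<1/m$ is essential; once that is in hand, the remainder is a direct transcription of the proof of Proposition~\ref{proposition:converge}.
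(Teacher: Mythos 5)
Your proof is correct and follows essentially the same route as the paper, whose own proof is simply the statement that the argument of Proposition \ref{proposition:converge} carries over: you transcribe that ball-and-$\epsilon(\rho)$ argument to the point $(S^\text{max},E^\text{opt})$, while making explicit the two details the paper leaves implicit --- that $(S^\text{max},E^\text{opt})$ is the unique maximizer of $S-\kappa E$ over $\mathcal{O}$ because $0<\kappa<1/m$ (via the building-owner proposition), and that the non-$\gamma$ terms $S_2-\lambda W(S_2,E_2)$ have bounded oscillation on the compact set $\mathcal{O}$. No gaps; your write-up is, if anything, more careful than the paper's.
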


\begin{proof}
The proof is identical to that of Proposition \ref{proposition:converge}.
\end{proof}

\begin{remark}
The intuition of Proposition \ref{proposition:positive} is that the incentive is always positive; the building manager is never fined, regardless of the operation of the building.  As a result, the incentive can be thought of as a performance-based bonus.
\end{remark}

\begin{remark}
Proposition \ref{proposition:our_converge} says that as the incentive amount is increased, the building will eventually be configured at the optimal point $(S^\text{max},E^\text{opt})$.  This behavior is seen in Fig. \ref{fig:monte_model4}.
\end{remark}

There are two important implementation issues to mention.  First, it is not unreasonable to measure occupant satisfaction for the purpose of implementing the incentive: The proliferation of Internet and mobile technologies is reducing the costs and difficulties of conducting surveys to measure satisfaction.  Another advantage of this incentive is that it requires the use of conceptually simple parameters.  The size and usage of the building allows estimation of $E^\text{max}$ and $E^\text{opt}$; furthermore, an upper bound on $S^\text{max}$ and lower bound on $S^\text{min}$ are needed.

There is a last issue regarding the maximum payout.  In practice, it is not possible to provide an arbitrarily large incentive.  Let $P$ be the maximum payout that the building owner is willing to provide.  One possible value for the incentive is given by $\gamma = \mu P/(S^\text{max} - \kappa E^\text{opt})$, where $\mu$ is the elasticity of the building manager's utility with respect to a monetary payment.

\subsection{Simulation Analysis}


A pressing question is what monetary and energy savings are possible with the incentive scheme proposed in (\ref{eqn:our_inc}); a simulation is possible if $\mu$ is known.  Using dimensional analysis, we estimate $\mu = (S^*_1 - \lambda E^*_1)/R_1$, where $R_1$ is the monetary salary over period 1 and $(S^*_1,E^*_1)$ is the operating point of the building manager without any incentives.  For definiteness, we assume that each period spans one day.  

Under these assumptions, we can simulate the amount of savings per day as a function of the maximum payout $P$ and the price of energy: This is shown in Table \ref{table:simulation}.  The units of energy are in MWh/day, and the monetary values are in United States Dollars per day (USD/day).  For example, when energy costs \$100/MWh and maximum payout is \$200/day, then our analysis predicts 6.7MWh of energy savings per day and an increase in occupant comfort. This translates to a maximum payout over one year of \$73,000 and a savings of \$171,550 and 2445.5MWh over the span of one year.

\begin{table}[!h]
\centering
\begin{tabular}{|c||c|c|c|c|c|}
\hline
$\mathbf{P}$ & \textbf{\$0} & \textbf{\$50} & \textbf{\$100} & \textbf{\$150} & \textbf{\$200} \\
\hline
\hline
$\mathbf{E^*_2-E^*_1}$ & 0 & 0 & 0 & --4.5 & --6.7 \\
\hline
$\mathbf{S^*_2-S^*_1}$ & 0 & 0 & 0 & 0.1 & 0.1 \\
\hline
\textbf{Savings w/} & \multirow{2}{*}{\$0} & \multirow{2}{*}{--\$38} & \multirow{2}{*}{--\$75} & \multirow{2}{*}{--\$49} & \multirow{2}{*}{--\$64} \\
\textbf{\$20/MWh}&&&&&\\
\hline
\textbf{Savings w/} & \multirow{2}{*}{\$0} & \multirow{2}{*}{--\$38} & \multirow{2}{*}{--\$75} & \multirow{2}{*}{\$130} & \multirow{2}{*}{\$200} \\
\textbf{\$60/MWh}&&&&&\\
\hline
\textbf{Savings w/} & \multirow{2}{*}{\$0} & \multirow{2}{*}{--\$38} & \multirow{2}{*}{--\$75} & \multirow{2}{*}{\$310} & \multirow{2}{*}{\$470} \\
\textbf{\$100/MWh}&&&&&\\
\hline
\end{tabular}
\caption{Simulated Energy and Monetary Savings per Day with Incentive Scheme in (\ref{eqn:our_inc})}
\label{table:simulation}
\end{table}


These numerical results have several interesting characteristics.  First, there is an element of adverse selection that is present in this proposed incentive scheme.  Observe that when the maximum payout $P$ is \$50 or \$100, the change in energy usage and satisfaction is nearly zero; yet there is a bonus that is given.  Second, the monetary savings can be negative, which means that the bonus given to the building manager exceeds the money saved from reducing energy consumption for many combinations of payout values and energy costs.

These results have important implications for the implementation of incentive schemes such as (\ref{eqn:our_inc}): The proposed incentive scheme is not cost effective unless both the cost of energy is moderate and the maximum payout is high.  Because of the particularities of BRITE-S, the price of energy is currently about \$60/MWh.  This means that the incentive scheme is currently viable, but it requires a high payout.  The high payout required is due to a low elasticity of the building manager's utility with respect to a bonus --- this effectively reduces the value of the payout.  

At a broader level, our analysis suggests that all incentive schemes for reducing energy consumption will find difficulties in producing monetary savings without also providing high payouts.  The key problem is the high inelasticity of the manager's utility, and this will reduce the effectiveness of any incentive by essentially dampening its effect.

\section{Conclusion}

One proposed incentive scheme (\ref{eqn:bad_inc}) for encouraging efficient building operation was shown to suffer from moral hazard problems, in addition to encouraging reduced quality of service.  As a result, we propose an incentive amount
\begin{equation}
P\cdot\frac{S_2 - \kappa E_2}{S^\text{max} - \kappa E^\text{opt}},
\end{equation}
where $\kappa$ is as defined in (\ref{eqn:kappa}).This incentive scheme provides a tradeoff between quality of service and energy efficiency, and it likely has applicability to other systems (such as vehicle traffic networks \cite{aswani2011acc} where individual drivers take a role similar to that of a building manager) in which such a tradeoff is desired.  It has certain desirable properties, such as not encouraging moral hazards and always being nonegatively valued (so as to never penalize agents).





\section*{Acknowledgment}

The authors thank Galina Schwartz for discussions about electricity markets. 


\bibliographystyle{IEEEtran}
\bibliography{biblio}  

\end{document}